\providecommand{\U}[1]{\protect \rule{.1in}{.1in}}
\newtheorem{theorem}{Theorem}[section]
\newtheorem{corollary}[theorem]{Corollary}
\newtheorem{definition}[theorem]{Definition}
\newtheorem{lemma}[theorem]{Lemma}
\newtheorem{proposition}[theorem]{Proposition}
\newtheorem{remark}[theorem]{Remark}
\numberwithin{equation}{section}
\begin{document}
\title[Painlev\'{e} VI equation]{Unitary monodromy implies the smoothness along the real axis for some
Painlev\'{e} VI equation, I}
\author{Zhijie Chen}
\address{Department of Mathematical Sciences, Yau Mathematical Sciences Center,
Tsinghua University, Beijing, 100084, China }
\email{zjchen@math.tsinghua.edu.cn}
\author{Ting-Jung Kuo}
\address{Taida Institute for Mathematical Sciences (TIMS), National Taiwan University,
Taipei 10617, Taiwan }
\email{tjkuo1215@gmail.com}
\author{Chang-Shou Lin}
\address{Taida Institute for Mathematical Sciences (TIMS), Center for Advanced Study in
Theoretical Sciences (CASTS), National Taiwan University, Taipei 10617, Taiwan }
\email{cslin@math.ntu.edu.tw}

\begin{abstract}
In this paper, we study the Painlev\'{e} VI equation with parameter $(\frac
{9}{8},\frac{-1}{8},\frac{1}{8},\frac{3}{8})$. We prove

(i) An explicit formula to count the number of poles of an algebraic solution
with the monodromy group of the associated linear ODE being $D_{N}$, where $D_{N}$ is the dihedral group of order
$2N$.

(ii) There are only four solutions without poles in $\mathbb{C}\backslash
\{0,1\}  $.

(iii) If the monodromy group of the associated linear ODE of a solution
$\lambda (t)$ is unitary, then $\lambda(t)  $ has
no poles in $\mathbb{R}\backslash \{0,1\}  $.

\end{abstract}
\maketitle

\section{Introduction}

In literature, Painlev\'{e} VI equation with four free parameters
$(\alpha,\beta,\gamma,\delta)$ (PVI$(\alpha,\beta,\gamma,\delta)$) can be
written as%
\begin{align}
\frac{d^{2}\lambda}{dt^{2}}=  &  \frac{1}{2}\left(  \frac{1}{\lambda}+\frac
{1}{\lambda-1}+\frac{1}{\lambda-t}\right)  \left(  \frac{d\lambda}{dt}\right)
^{2}-\left(  \frac{1}{t}+\frac{1}{t-1}+\frac{1}{\lambda-t}\right)
\frac{d\lambda}{dt}\nonumber \\
&  +\frac{\lambda(\lambda-1)(\lambda-t)}{t^{2}(t-1)^{2}}\left[  \alpha
+\beta \frac{t}{\lambda^{2}}+\gamma \frac{t-1}{(\lambda-1)^{2}}+\delta
\frac{t(t-1)}{(\lambda-t)^{2}}\right]  . \label{46}%
\end{align}

There are two fundamental facts about PVI$(\alpha,\beta,\gamma,\delta)$
(\ref{46}). The first one is the \emph{Painlev\'{e} property} which says that
both the branch points and essential singularities of any solution
$\lambda(t)  $ of (\ref{46}) are independent of any particular
solution and consist of $0,1,\infty$ only. Thus $\lambda(t) $ is a
multi-valued meromorphic function in $\mathbb{C}\backslash \{0,1\}$; naturally
it can be lifted to the universal covering $\mathbb{H}=\left \{  \tau \text{
}|\text{ }\operatorname{Im}\tau>0\right \}  $ of $\mathbb{C}\backslash \{0,1\}$
through the transformation:%
\begin{equation}
t(\tau)=\frac{e_{3}(\tau)-e_{1}(\tau)}{e_{2}(\tau)-e_{1}(\tau)}\text{ \  \ and
\  \ }\lambda(t)=\frac{\wp(p(\tau)|\tau)-e_{1}(\tau)}{e_{2}(\tau)-e_{1}(\tau)}.
\label{tr}%
\end{equation}
where $\wp( z|\tau) $ is the Weierstrass elliptic function with periods $1$
and $\tau$.

Throughout the paper, we use the notations $\omega_{0}=0$, $\omega_{1}=1$,
$\omega_{2}=\tau$, $\omega_{3}=1+\tau,$ $e_{k}=e_{k}(\tau)\doteqdot \wp(
\frac{\omega_{k}}{2}|\tau) ,$ $k=1,2,3$ and $\Lambda_{\tau}=\mathbb{Z+Z}\tau$,
where $\tau \in \mathbb{H}$. Define $E_{\tau}\doteqdot \mathbb{C}/\Lambda_{\tau}$
to be a flat torus in the plane and $E_{\tau}[2]\doteqdot \{ \frac{\omega_{i}%
}{2}$ $|$ $0\leq i\leq3\}+\Lambda_{\tau}$ to be the set consisting of the
lattice points and $2$-torsion points in $E_{\tau}$.

By the transformation (\ref{tr}), $p(\tau)$ satisfies the following
\emph{elliptic form} of PVI%
\begin{equation}
\frac{d^{2}p(\tau)}{d\tau^{2}}=\frac{-1}{4\pi^{2}}\sum_{k=0}^{3}\alpha_{k}%
\wp^{\prime}\left(  \left.  p(\tau)+\frac{\omega_{k}}{2}\right \vert
\tau \right)  , \label{124}%
\end{equation}
where $\wp^{\prime}(z|\tau)=\frac{d}{dz}\wp(z|\tau)$ and
\begin{equation}
\left(  \alpha_{0},\alpha_{1},\alpha_{2},\alpha_{3}\right)  =\left(
\alpha,-\beta,\gamma,\tfrac{1}{2}-\delta \right)  . \label{para}%
\end{equation}
See \cite{Babich-Bordag, Y.Manin} for a proof. As a solution to (\ref{124}),
$p( \tau) $ is considered as a continuous function from $\mathbb{H}$ to
$\mathbb{C}$ and holomorphic except at those $\tau$'s such that $p(\tau) $ $\in E_{\tau}[2]$. Although $p(
\tau) $ has branch points at $p( \tau) \in E_{\tau}[2]\setminus \Lambda
_{\tau}$, by the Painlev\'{e} property, $\wp(p(\tau)|\tau)$ is a
\emph{single-valued} \emph{meromorphic }function in $\mathbb{H}$ and its poles
are exactly equal to those $\tau$'s such that $p(\tau) \in \Lambda_{\tau}$.

Another important feature about (\ref{46}) is that for any solution
$\lambda(t)$, there associates with a second order Fuchsian ODE
defined on $\mathbb{CP}^{1}$, whose regular singular points are exactly
$\{0,1,t,\lambda(t) ,\infty \}$ with $\lambda(t)$ being an apparent singularity
(cf. \cite{GP}) such that the monodromy representation is invariant under the
deformation of $t$. See \cite{Jimbo-Miwa} for a more general theory. By using the transformation (\ref{tr}), this associated linear ODE on $\mathbb{CP}^1$
could be transformed into a new ODE defined on $E_{\tau}$, a generalized
Lam\'{e} equation (GLE) which can be written as follows:%
\begin{align}
y^{\prime \prime}(z)=  &  \Big[\sum \nolimits_{k=0}^{3}n_{k}(n_{k}+1)\wp \left(
z+\frac{\omega_{k}}{2}\right)  +\tfrac{3}{4}(\wp(z+p)\label{505}\\
&  +\wp(z-p))+A(\zeta(z+p)-\zeta(z-p))+B\Big]y(z),\nonumber
\end{align}
where $A,B$ are complex numbers and the parameters are related by%
\begin{equation}
\alpha_{k}=\tfrac{1}{2}\left(  n_{k}+\tfrac{1}{2}\right)  ^{2}\text{ \ for
\ }k\in \{0,1,2,3\}. \label{505-1}%
\end{equation}
In (\ref{505}), $\zeta(z)$ $=\zeta( z|\tau)\doteqdot-\int^{z}\wp(\xi|\tau
)d\xi$ is the Weierstrass zeta function. The function $\zeta(z)$ is an odd but
not elliptic function. Indeed, $\zeta( z) $ satisfies
\begin{equation}
\zeta( z+\omega_{k}|\tau) =\zeta( z|\tau) +\eta_{k}( \tau) ,\quad k=1,2,3,
\label{40-2}%
\end{equation}
where $\eta_{k}(\tau) $ are called the quasi-periods of $\zeta( z) $. The GLE
(\ref{505}) has regular singularities at $E_{\tau}[ 2]\cup(\{ \pm p( \tau)
\}+\Lambda_{\tau}) $. Similar to $\lambda(t)$, $p( \tau) $ is always an
\emph{apparent} singularity. The monodromy representation of (\ref{505}) is
invariant under the deformation of $\tau$ if and only if $p(\tau) $ is a
solution of (\ref{124}). This fact might be indirectly proved by the
transformation (\ref{tr}). In \cite{Chen-Kuo-Lin}, this fact and the
associated Hamiltonian system has been directly derived.

Naturally, this isomonodromic feature proposes the following question:
\emph{How the monodromy group of the associated linear ODE effects the
solution }$\lambda(t) $\emph{ or }$p( \tau) $\emph{? }

For certain parameters $\alpha_{k}$, the monodromy representation of the GLE
(\ref{505}) associated with $p(\tau)$ is easier to compute than that of the
associated ODE with $\lambda(t)$. For example, if all $n_{k}$ $\in \mathbb{Z}$,
then the monodromy representation of (\ref{505}) is reduced to a homomorphism
from\ $\pi_{1}(E_{\tau})$ to $SL(2,\mathbb{C})$. This fact immediately implies that the
monodromy group is always \emph{abelian}, a significant reduction. See
\cite{Chen-Kuo-Lin2}. Thus, there always exists a common eigenfunction to all
the monodromy matrices of the GLE (\ref{505}).

\begin{definition}
Let the parameter $(\alpha,\beta,\gamma,\delta)$ be given by (\ref{para}) and
(\ref{505-1}) with $n_{k}\in \mathbb{Z}$ for all $k$.

(i) A solution $\lambda(t)$ of (\ref{46}) or $p(\tau)$ of the elliptic form
(\ref{124}) is called completely reducible if the monodromy representation of
the associated GLE (\ref{505}) is completely reducible (i.e. all monodromy
matrices can be diagonalized simultaneously); otherwise, $\lambda(t)$ or $p(
\tau) $ is called non-completely reducible.

(ii) A solution $\lambda(t)$ or $p(\tau)$ is called an unitary solution if up to a common conjugation, the
monodromy group of the associated GLE (\ref{505}) is contained in the unitary
group $SU(2)$.
\end{definition}

Obviously, any unitary solution is completely reducible. We remark that an
unitary solution is related to the existence of conformal metric with constant
curvature $+1$ (those metrics are with conic singularities in general). See
\cite{EGH}.

Now we come back to the problem we are concerned above. For PVI$(\frac{1}%
{8},\frac{-1}{8},$ $\frac{1}{8},\frac{3}{8})$ (i.e. $n_{k}=0$ in
(\ref{505-1}), $\forall k=0,1,2,3$, the well-known case studied by Hitchin
\cite{Hit1}), among other things, we proved in \cite{CKLW} the following
theorem: \medskip

\noindent \textbf{Theorem A.} (\cite{CKLW})

\begin{itemize}
\item[(i)] \emph{PVI}$(\frac{1}{8},\frac{-1}{8},\frac{1}{8},\frac{3}{8}%
)$\emph{ has exactly three solutions which are completely reducible and
satisfy} $\lambda(t)\not \in \{0,1,t,\infty \}$ \emph{for any} $t\in
\mathbb{C}\backslash \{0,1\}$.

\item[(ii)] \emph{Any unitary solution }$\lambda( t) $\emph{ of PVI}$(\frac
{1}{8},\frac{-1}{8},\frac{1}{8},\frac{3}{8})$\emph{ has no poles in
}$\mathbb{R}\backslash \{0,1\}$\emph{.}\medskip
\end{itemize}

By using the Okamoto transformation \cite{Okamoto1}, it is well known that any
solution of PVI$(\alpha,\beta,\gamma,\delta)$ with parameters given by
(\ref{505-1}) with $n_{k}\in \mathbb{Z}$ could be obtained from solution
$\lambda(t)$ of PVI$(\frac{1}{8},\frac{-1}{8},\frac{1}{8},\frac{3}{8})$ (i.e.
$n_{k}=0$ for all $k$). However, the Okamoto transformation is a rational map
of $\lambda(t)$ and $\lambda^{\prime}(t)$. So obviously, the smoothness of
solutions can not be preserved by the Okamoto transformation. See \cite{CKLW}
for discussions of this issue.

In this article, we extend Theorem A (ii) to the case $\left(  n_{0}%
,n_{1},n_{2},n_{3}\right)  =\left(  1,0,0,0\right)  $.

\begin{theorem}
\label{theorem1-7 copy(1)}Let $\lambda(t)$ be any unitary solution of
PVI$(\frac{9}{8},\frac{-1}{8},\frac{1}{8},\frac{3}{8})$. Then $\lambda(t)$ has
no poles in $\mathbb{R}\backslash \{0,1\}$, namely $\lambda(t)$ is holomorphic
in $\mathbb{R}\backslash \{0,1\}$.
\end{theorem}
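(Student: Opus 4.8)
The plan is to transfer the problem to the elliptic form (\ref{124}) with $(n_0,n_1,n_2,n_3)=(1,0,0,0)$, where the solution is $p(\tau)$ and the associated linear ODE is the GLE (\ref{505}) with $n_0(n_0+1)=2$. Under the transformation (\ref{tr}), a pole of $\lambda(t)$ at a real $t_0\in\mathbb{R}\setminus\{0,1\}$ corresponds to a point $\tau_0$ lying on (the lift of) the real axis — equivalently, on the boundary curve where $E_{\tau_0}$ admits an extra real structure — at which $p(\tau_0)\in\Lambda_{\tau_0}$, i.e. $\wp(p(\tau)|\tau)$ has a pole. So the goal becomes: if the monodromy of the GLE is unitary (conjugate into $SU(2)$), then $\wp(p(\tau)|\tau)$ has no pole at such boundary $\tau_0$. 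First I would recall from the earlier parts of the paper (and from \cite{Chen-Kuo-Lin2, CKLW}) that since all $n_k\in\mathbb{Z}$, the monodromy representation factors through $\pi_1(E_\tau)\to SL(2,\mathbb{C})$ and is abelian; unitarity then means both generators are simultaneously conjugate to diagonal matrices in $SU(2)$, i.e. the local exponents are purely "rotational," with monodromy data parametrized by a pair of angles $(\theta_1,\theta_2)\in(\mathbb{R}/2\pi\mathbb{Z})^2$ (equivalently a point on the character variety that is a real torus).

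The core mechanism I would follow is the one behind Theorem A(ii): a unitary GLE corresponds to a developing map which is the ratio of two linearly independent solutions, and unitarity forces this ratio to be an isometry for the spherical metric, producing a genuine metric of constant curvature $+1$ on $E_\tau$ with prescribed conic singularities at $E_\tau[2]\cup\{\pm p\}$ (cf. \cite{EGH}); the cone angles are determined by the $n_k$ and by the apparent-singularity exponent $3/4$ at $\pm p$ (so the angle there is $4\pi$, i.e. no actual singularity), while at the four half-period points $\tfrac{\omega_k}{2}$ the angle is $2\pi(2n_k+1)$, i.e. $6\pi$ at $\omega_0/2$ and $2\pi$ (smooth) at the other three. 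The key point is that such a spherical metric is a smooth global object on the compact torus $E_\tau$ and depends continuously on $\tau$ as $\tau\to\tau_0$ on the boundary; a pole of $\wp(p(\tau)|\tau)$ at $\tau_0$ would mean $p(\tau_0)\to 0\in\Lambda_{\tau_0}$, i.e. the cone point $+p$ collides with the cone point at $\omega_0/2$. I would show that this collision is incompatible with the existence of the spherical metric: near such a $\tau_0$ one analyzes the asymptotics of $A(\tau),B(\tau)$ and of $p(\tau)$ using the elliptic form (\ref{124}) together with the local monodromy/exponent constraints, and derives that the would-be limiting metric would need a cone point of angle $2\pi(2\cdot 1+1)+ (\text{the }4\pi\text{ from }\pm p\text{ merging})$ that violates either the Gauss–Bonnet obstruction or the explicit classification of reducible/unitary spherical tori with one cone point from \cite{CKLW}. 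Concretely: near $t_0$ real, write the Hamiltonian/Painlevé expansion of $p(\tau)$ at the pole, feed it into the monodromy of (\ref{505}), and check that the resulting pair of local monodromies cannot be simultaneously unitarized — contradicting the hypothesis.

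More precisely, here is the step order I would carry out. Step 1: Suppose for contradiction $\lambda(t_0)=\infty$ for some real $t_0$, lift to $\tau_0$ on the corresponding boundary arc, so $p(\tau_0)\in\Lambda_{\tau_0}$. Step 2: Use the local analysis of the GLE (\ref{505}) near $\tau_0$: at a pole of $p$, the two regular singular points $\pm p$ approach $\omega_0/2$, and I would compute (via the known Painlevé expansion of $p(\tau)$ near a pole, which by the Painlevé property is of the form $p(\tau)^2 \sim c(\tau-\tau_0)$) the limiting linear ODE, which degenerates to a GLE (or its scalar reduction) with a single singular point at $\omega_0/2$ of a definite exponent difference determined by $n_0=1$ and the merging $3/4+3/4$ contributions. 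Step 3: Track the monodromy: the monodromy matrices $M_1,M_2$ of (\ref{505}) (around the two periods) vary continuously in $\tau$ and have a well-defined limit at $\tau_0$; unitarity is a closed condition so the limit is also unitary. Step 4: Invoke the real-axis rigidity: for $\tau_0$ on the boundary the torus $E_{\tau_0}$ is "rectangular/rhombic" with an anti-holomorphic involution, and the pole locus of $p$ would have to be compatible with this symmetry; combining with Step 2's degeneration, show that the limiting monodromy is either reducible-but-not-unitary (a parabolic or a non-unitarizable diagonal pair) or forces $p(\tau_0)\notin\Lambda_{\tau_0}$ after all — contradiction. The main obstacle, and where I expect the real work to lie, is Step 2–3: controlling the behavior of $A(\tau),B(\tau)$, and hence of the monodromy, as the apparent singularity $p$ hits a lattice point on the boundary — i.e. proving that no unitary branch of the monodromy data is consistent with a pole of $\wp(p(\tau)|\tau)$ on the real axis. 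This is exactly the analogue of the delicate case analysis in \cite{CKLW} for $(0,0,0,0)$, now complicated by the $n_0=1$ term $2\wp(z+\omega_0/2)$, so I would expect to need the explicit formula from part (i) of this paper (counting poles for $D_N$ monodromy) and the classification of the four pole-free solutions from part (ii) to pin down which monodromy data can occur and to exclude the unitary ones from producing a real pole.
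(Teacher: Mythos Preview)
Your proposal is not a proof but a speculative strategy, and the key step is explicitly left open: you yourself identify Steps 2--3 (controlling $A(\tau),B(\tau)$ and the monodromy as $p$ collides with a lattice point, and showing this forces a non-unitarizable limit) as ``the main obstacle'' and ``where the real work lies,'' then offer no argument beyond the hope that a Gauss--Bonnet obstruction or a case analysis will emerge. The collision heuristic is also not obviously conclusive: the apparent singularities $\pm p$ carry no actual cone angle for the spherical metric, so their merging with the $6\pi$ cone at $0$ does not immediately violate any known obstruction, and the limiting monodromy could in principle remain unitary. Finally, your plan to lean on the pole-counting formula and the classification of pole-free solutions inverts the paper's logical order (those are proved in \S3--\S4, after this theorem) and in any case you do not indicate how they would close the gap.

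The paper's proof takes an entirely different, algebraic route that you do not touch. By Theorem~B every completely reducible (hence every unitary) solution is $\lambda_{r,s}(t)$ with $(r,s)\in\mathbb{R}^2\setminus\tfrac12\mathbb{Z}^2$, given by the explicit formula (\ref{625}). Theorem~\ref{simple-zero} then reduces the question ``is $t_0$ a pole?'' to ``does the premodular form $Z_{r,s}^{(2)}(\tau)$ of (\ref{II-5}) vanish at $\tau_0$?'' (the alternative $r+s\tau_0\in\Lambda_{\tau_0}$ is impossible for real $(r,s)$ and $\tau_0\in\mathbb{H}$). The real axis $t_0\in\mathbb{R}\setminus\{0,1\}$ lifts to $\tau_0\in SL(2,\mathbb{Z})\cdot i\mathbb{R}^+$, and Lemma~\ref{lem2} together with Theorem~C (which locates all zeros of $Z_{r,s}^{(2)}$ in the interior of $F_0$) shows $Z_{r,s}^{(2)}$ never vanishes on this set. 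No spherical metrics, degeneration analysis, or monodromy tracking are used.
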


As discussed above, Theorem \ref{theorem1-7 copy(1)} can not be obtained from
Theorem A by the Okamoto transformation. Our proof is based on the
generalization of the famous Hitchin theorem for PVI$(\frac{1}{8},\frac{-1}%
{8},\frac{1}{8},\frac{3}{8})$ to PVI$(\frac{9}{8},\frac{-1}{8},\frac{1}%
{8},\frac{3}{8})$.\medskip

\noindent \textbf{Theorem B. }(\cite{Chen-Kuo-Lin2}) $p(\tau) $ \emph{is a
completely reducible solution to the elliptic form of PVI}$(\frac{9}{8}%
,\frac{-1}{8},\frac{1}{8},\frac{3}{8})$\emph{ if and only if there exists a
fixed pair }$(r,s)\in \mathbb{C}^{2}\backslash \frac{1}{2}\mathbb{Z}^{2}$\emph{
such that}%
\begin{equation}
\wp(p(\tau)|\tau)=\wp(\alpha)+\frac{3\wp^{\prime}(\alpha)Z_{r,s}^{2}+\left(
12\wp^{2}(\alpha)-g_{2}\right)  Z_{r,s}+3\wp(\alpha)\wp^{\prime}(\alpha
)}{2\left(  Z_{r,s}^{3}-3\wp(\alpha)Z_{r,s}-\wp^{\prime}(\alpha)\right)  },
\label{625}%
\end{equation}
\emph{where }$\alpha=\alpha(\tau)\doteqdot r+s\tau$,%
\begin{equation}
Z_{r,s}=Z_{r,s}(\tau)\doteqdot \zeta(r+s\tau|\tau)-r\eta_{1}(\tau)-s\eta
_{2}(\tau), \label{Hecke}%
\end{equation}
\emph{ and} $g_{2}=g_{2}(\tau)$ \emph{is the coefficient of}
\begin{equation}
\label{II-33}\wp^{\prime2}=4\wp(z|\tau)^{3}-g_{2}(\tau)\wp(z|\tau)-g_{3}%
(\tau).
\end{equation}

The formula (\ref{625}) was first obtained by Takemura \cite{Takemura} and
also obtained in \cite{Chen-Kuo-Lin2} by a different argument. We will see
that (\ref{625}) plays a fundamental role for studying the poles of
$\lambda(t)$.

\begin{remark}
Let $p( \tau) $ be the solution given by (\ref{625}) and (\ref{505}) be the
associated ODE with this solution. Then it is proved in \cite{Chen-Kuo-Lin2}
that there is a pair of independent solutions $y_{i}( z|\tau) $, $i=1,2$, such
that their analytic continuation along path $\ell_{i}$ satisfy%
\begin{align*}
\ell_{1}^{\ast}\left(  y_{1},y_{2}\right)   &  =\left(  y_{1},y_{2}\right)
\begin{pmatrix}
e^{2\pi is} & 0\\
0 & e^{-2\pi is}%
\end{pmatrix}
,\\
\ell_{2}^{\ast}\left(  y_{1},y_{2}\right)   &  =\left(  y_{1},y_{2}\right)
\begin{pmatrix}
e^{-2\pi ir} & 0\\
0 & e^{2\pi ir}%
\end{pmatrix}
,
\end{align*}
where $\left(  r,s\right)  $ is the pair in (\ref{625}) and $\ell_{i},i=1,2$,
are two fundamental cycles on $E_{\tau}$ with the base point $q_{0}%
\not \in E_{\tau}[ 2]\cup(\{ \pm p( \tau) \}+\Lambda_{\tau}) $. In this paper,
the solution given by (\ref{625}) is denoted by $p_{r,s}( \tau) $ and the
corresponding $\lambda( t) $ through (\ref{tr}) by $\lambda_{r,s}( t) $.
\end{remark}

Obviously, $\lambda_{r,s}( t) $ (or $p_{r,s}(\tau) $) is unitary if and only
if $( r,s) \in \mathbb{R}^{2}\backslash \frac{1}{2}\mathbb{Z}^{2}$. Furthermore,
$\lambda_{r,s}(t) $ is an algebraic solution if and only if $( r,s) $ is a
$N$-torsion point for some $N\in \mathbb{N}_{\geq3}$, that is $( r,s) \in
Q_{N}$ where%
\begin{equation}
Q_{N}\doteqdot \left \{  \left.  \left(  \tfrac{k_{1}}{N},\tfrac{k_{2}}%
{N}\right)  \right \vert \gcd(k_{1},k_{2},N)=1,\text{ }0\leq k_{1},k_{2}\leq
N-1\right \}  . \label{q-n}%
\end{equation}
For Painlev\'{e} VI equation, all the algebraic solutions have been classified
through the Okamoto transformation. For example, the monodromy group of the
associated linear ODE on $\mathbb{CP}^1$ (not the GLE on $E_\tau$!) of an algebraic solution of PVI$(\alpha,\beta
,\gamma,\delta)$ with parameters given by (\ref{para}) and (\ref{505-1}) with
$n_{k}\in \mathbb{Z}$ is always a dihedral group $D_{N}$ of order $2N$ for some
$N\in \mathbb{N}_{\geq3}$. See
\cite{Boalch,Dubrovin-Mazzocco,Lisovyy-Tykhyy,Mazzocco}. From the
classification of Theorem B, such a $\lambda( t) $ must be of the form
$\lambda_{r,s}( t) $ with $( r,s) \in Q_{N}, N\geq3$. Note that the problem
concerning the distribution of poles of Painlev\'{e} VI solution has been
addressed in \cite{Brezhnev,Guzzetti-1,Shimomura}.

Let $\phi(N)$ be the Euler function defined by%
\begin{equation}
\phi(N):=\# \{k\in \mathbb{Z}|\gcd(k,N)=1,0\leq k<N\}.\label{Euler}%
\end{equation}
In the following, we shall apply Theorem B to obtain the formula to count the
number of poles of algebraic solutions of PVI$(\frac{9}{8},\frac{-1}{8}%
,\frac{1}{8},\frac{3}{8})$.

\newpage

\begin{theorem}
\label{thm number}
\end{theorem}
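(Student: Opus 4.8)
The plan is to read off the poles of $\lambda_{r,s}(t)$ directly from formula \eqref{625} of Theorem B. Pulling back through the uniformization \eqref{tr}, a pole of $\lambda_{r,s}$ at $t_0\in\mathbb{C}\setminus\{0,1\}$ corresponds to a point $\tau_0$ lying over $t_0$ at which $p_{r,s}(\tau_0)\in\Lambda_{\tau_0}$, i.e. $\wp(p_{r,s}(\tau_0)|\tau_0)=\infty$. Since $(r,s)\in Q_N\subset\frac{1}{N}\mathbb{Z}^2$ forces $\alpha(\tau)=r+s\tau\notin\Lambda_\tau$ for every $\tau$, the right-hand side of \eqref{625} blows up precisely when the denominator
$$\Delta_{r,s}(\tau):=Z_{r,s}(\tau)^3-3\wp(\alpha)Z_{r,s}(\tau)-\wp'(\alpha)$$
vanishes while the numerator $3\wp'(\alpha)Z_{r,s}^2+(12\wp^2(\alpha)-g_2)Z_{r,s}+3\wp(\alpha)\wp'(\alpha)$ does not. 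So the first step is to verify that numerator and $\Delta_{r,s}$ have no common zero in $\mathbb{H}$: viewing both as polynomials in $Z_{r,s}$ with coefficients in the function field of the cubic $Y^2=4X^3-g_2X-g_3$ (with $X=\wp(\alpha)$, $Y=\wp'(\alpha)$), their resultant is an explicit function of $\tau$, and the finitely many $\tau$ where it could vanish are excluded using $(r,s)\notin\frac{1}{2}\mathbb{Z}^2$ together with the isomonodromy picture in the Remark, where $y_1,y_2$ remain independent so that \eqref{505} never degenerates at a pole of $\wp(p_{r,s})$. After this reduction, counting poles of $\lambda_{r,s}$ amounts to counting zeros of $\Delta_{r,s}$ in $\mathbb{H}$.

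Second, I would identify $\Delta_{r,s}$ as a holomorphic modular form. Under $\tau\mapsto\gamma\tau$ with $\gamma\in SL(2,\mathbb{Z})$, the pair $(r,s)$ transforms by the contragredient linear action and $\alpha(\tau)$, $Z_{r,s}(\tau)$, $\wp(\alpha)$, $\wp'(\alpha)$ acquire automorphy factors of weight $-1,1,2,3$ — the $r\eta_1+s\eta_2$ correction in \eqref{Hecke} is exactly what makes $Z_{r,s}$ transform with weight $1$ — so each term of $\Delta_{r,s}$ has weight $3$ and $\Delta_{r,s}$ is a weight-$3$ form for $\Gamma(N)$. It is holomorphic on $\mathbb{H}$ because $\alpha(\tau)$ never meets $\Lambda_\tau$, and (as the $q$-expansion below shows) finite at the cusps, hence a genuine holomorphic modular form. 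Since $\Gamma(N)$ is torsion-free for $N\ge 3$ there are no elliptic-point corrections, and the valence formula gives
$$\sum_{w\in X(N)}\operatorname{ord}_w\Delta_{r,s}=\frac{3}{12}\,[\,\overline{SL(2,\mathbb{Z})}:\overline{\Gamma(N)}\,]=\frac{N^3}{8}\prod_{p\mid N}\Bigl(1-\frac{1}{p^2}\Bigr).$$
The poles of $\lambda_{r,s}$ then correspond to the zeros of $\Delta_{r,s}$ in $\mathbb{H}$, i.e. to this total minus the orders of vanishing at the cusps of $X(N)$.

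The cusp analysis is the heart of the argument, and I expect it to be the main obstacle. At the cusp $\infty$ one inserts the $q$-expansions of $\zeta,\wp,\wp',\eta_1,\eta_2$ into $\Delta_{r,s}$: for $s\neq 0$ one finds $\Delta_{r,s}\to\pi^3 i\,(2s-1)\cdot 4s(1-s)\neq 0$, whereas for $s=0$ the nodal-cubic identity $(\pi\cot\pi r)^3-3\wp_{\mathrm{cyl}}(r)(\pi\cot\pi r)-\wp'_{\mathrm{cyl}}(r)=0$ shows that $\Delta_{r,0}$ vanishes there, and one must pin down its exact order. Every other cusp of $X(N)$ — these are indexed by $Q_N/\{\pm1\}$ — is $SL(2,\mathbb{Z})$-equivalent to $\infty$ once $(r,s)$ is replaced by an appropriate $SL(2,\mathbb{Z})$-translate, so the same two computations, organized over the whole cusp set, produce every cusp order; summing them and subtracting from the displayed total counts the zeros of $\Delta_{r,s}$ in $\mathbb{H}$. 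A final step converts this number into the number of poles of the algebraic solution $\lambda_{r,s}(t)$: using $\wp(p_{r,s})=\wp(p_{-r,-s})$ one sees that $\lambda_{r,s}(t(\tau))$ descends to the subgroup of $\Gamma(2)$ fixing $\pm(r,s)$, and tracking the degree of the covering that relates this modular curve to $X(N)$, together with the multiplicity (a factor built from $\phi(N)$) with which a fixed algebraic solution is produced as $(r,s)$ ranges over $Q_N$, yields the asserted explicit formula. The delicate points are getting the cusp orders — not merely the vanishing loci — exactly right, and treating the small cases $N=3,4$ separately, where the index and cusp-number formulas degenerate.
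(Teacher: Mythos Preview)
Your overall strategy is sound and is a genuine variant of the paper's route. The paper does not work with a single $\Delta_{r,s}$ on $X(N)$; instead it multiplies everything together into
\[
M_N(\tau)=\prod_{(r,s)\in Q_N}Z_{r,s}^{(2)}(\tau),
\]
which is a level-one modular form of weight $3|Q_N|$, and applies the valence formula on $SL(2,\mathbb Z)$. That collapses the whole cusp bookkeeping to the single cusp $\infty$, at the cost of having to verify that $M_N$ does not vanish at the elliptic points $i$ and $\rho$; the paper imports these nonvanishing statements from Theorem~C and \cite{Chen-Kuo-Lin3}. Your $\Gamma(N)$ approach has the opposite trade-off: since $\Gamma(N)$ is torsion-free for $N\ge 3$ you avoid the elliptic-point checks entirely, but you must account for all cusps of $X(N)$, each of width $N$, and then unwind the covering $X(N)\to X(2)\to X(1)$ to convert zero counts into pole counts of the algebraic solution. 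Both routes ultimately reduce to the same $q$-expansion at $\infty$.

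There is, however, a concrete gap in your cusp analysis. You claim that for $s\neq 0$ the limit $4\pi^3 i\,s(1-s)(2s-1)$ is nonzero, but this vanishes also at $s=\tfrac12$. The paper's Lemma on the asymptotics of $Z_{r,s}^{(2)}$ records exactly this: for $s=\tfrac12$ one has $Z_{r,s}^{(2)}(\tau)=-12\pi^3\sin(2\pi r)\,q^{1/2}+O(q)$. These half-order cusp zeros are precisely what produce the $\phi(N/2)$ term in the formula when $N$ is even; without them your count would be wrong for even $N$. (For odd $N$ there are no $(r,s)\in Q_N$ with $s=\tfrac12$, so your count would still come out right in that case.)

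Two smaller points. First, the ``resultant plus isomonodromy'' argument for non-simultaneous vanishing is vaguer than it needs to be: the paper carries out the Euclidean algorithm explicitly (its Lemma preceding the simple-zero corollary) and finds that the obstruction is exactly $3(g_2^3-27g_3^2)\wp'(\alpha)^2$, which is nonzero because the discriminant never vanishes and because $\wp'(\alpha)=0$ would force $(r,s)\in\tfrac12\mathbb Z^2$. Second, the cases $N=3,4$ are not degenerate for the index or cusp formulas; they are simply the cases where the pole count comes out to zero.
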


\begin{itemize}
\item[(i)] \emph{If }$N$\emph{ is odd, then there is only one algebraic
solution whose monodromy group\footnote{In this paper, for convenience, when
we say the monodromy group of an algebraic solution, we always mean the one of
the associated linear ODE on $\mathbb{CP}^{1}$, but not the one of this algebraic solution as a
multi-valued function.} is the dihedral group }$D_{N}$\emph{. This algebraic
solution has exactly 
\[
\frac{3|Q_{N}|}{4}-3\phi( N)
\]
poles in} $\mathbb{C}\backslash \{0,1\}$.

\item[(ii)] \emph{If }$N$\emph{ is even, there are exactly three algebraic
solutions and each of them has }%
\[
\frac{|Q_{N}|}{4}-\left(  \phi( N) +\phi \left(  \frac{N}{2}\right)  \right)
\]
\emph{poles in }$\mathbb{C}\backslash \{0,1\}$\emph{.}
\end{itemize}

\begin{remark}
For any solution $\lambda( t) $, there might be two different branches to have
poles at the same $t_{0}$. In this case, we count the number of poles at
$t_{0}$ by $2$. It is our conjecture that different branches should not have
common poles. See the discussion before the proof of Theorem \ref{thm number}
in \S 3. This conjecture was proved for PVI$(\frac{1}{8},\frac{-1}{8},\frac
{1}{8},\frac{3}{8})$ in \cite{CKLW}.
\end{remark}

Finally, we extend Theorem A (i) to PVI$(\frac{9}{8},\frac{-1}{8},\frac{1}%
{8},\frac{3}{8})$.

\begin{theorem}
\label{main-thm copy(2)}Among all the completely reducible solutions
$\lambda_{r,s}(t)$ to PVI$(\frac{9}{8},$ $\frac{-1}{8},\frac{1}{8},\frac{3}%
{8})$, there are exactly four solutions which have no poles in $\mathbb{C}%
\backslash \{0,1\}$. They are $\lambda_{\frac{1}{3},0}(t), \lambda_{\frac{1}%
{4},0}(t), \lambda_{\frac{1}{4},\frac{1}{4}}(t)$ and $\lambda_{0,\frac{1}{4}%
}(t)$.
\end{theorem}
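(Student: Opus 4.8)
\textbf{Proof proposal for Theorem \ref{main-thm copy(2)}.}

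The plan is to reduce the statement to a finite, checkable list by combining the pole-counting machinery behind Theorem \ref{thm number} with the explicit formula \eqref{625} from Theorem B. First I would observe that a completely reducible solution $\lambda_{r,s}(t)$ has no poles in $\mathbb{C}\backslash\{0,1\}$ precisely when $p_{r,s}(\tau)\notin\Lambda_\tau$ for all $\tau\in\mathbb{H}$, by the Painlev\'e property and the transformation \eqref{tr}. Via \eqref{625}, $\wp(p_{r,s}(\tau)|\tau)$ develops a pole exactly when the denominator $Z_{r,s}^3-3\wp(\alpha)Z_{r,s}-\wp'(\alpha)$ vanishes (and the numerator does not), i.e.\ when the ``Hecke'' point $(Z_{r,s},\text{something})$ lies on the cubic, which in the arithmetic of $E_\tau$ translates into a condition of the form $\pm\alpha(\tau)$ being a lattice point, equivalently $(r,s)\in\frac12\mathbb{Z}^2$ being excluded versus a degeneracy of $Z_{r,s}$. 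So the first real step is: characterize, for fixed $(r,s)$, the set of $\tau$ where the denominator in \eqref{625} vanishes, and show this set is nonempty unless $(r,s)$ lies in a very small exceptional locus.

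The second step is to make that exceptional locus explicit. From the pole-count formulas in Theorem \ref{thm number} one sees the number of poles of $\lambda_{r,s}(t)$ over $\mathbb{C}\backslash\{0,1\}$ is governed (after accounting for the action of the modular group and of the Klein four-group symmetries $(r,s)\mapsto(\pm r+\tfrac12\epsilon_1,\pm s+\tfrac12\epsilon_2)$) by a quantity that vanishes exactly for the smallest torsion orders. Concretely, I would run the same counting argument not just for $(r,s)\in Q_N$ but track when the count is zero: for $N=3$ the formula $\tfrac{3|Q_N|}{4}-3\phi(N)$ gives $0$, and for $N=4$ the formula $\tfrac{|Q_N|}{4}-(\phi(N)+\phi(N/2))$ gives $0$, while for all $N\geq 5$ (and the remaining cases $N=3,4$ in the other parity) the count is strictly positive. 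One must also handle non-torsion $(r,s)$: here I would argue that if $(r,s)\notin\frac12\mathbb{Z}^2$ is not torsion, then $\alpha(\tau)=r+s\tau$ and the associated $Z_{r,s}$ vary over a curve that genuinely meets the pole locus as $\tau$ ranges over $\mathbb{H}$, so such solutions always have poles — so the pole-free ones are necessarily algebraic, hence on the $N=3,4$ list.

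The third step is bookkeeping: translate ``$(r,s)\in Q_3$ with zero poles'' and ``$(r,s)\in Q_4$ with zero poles'' into actual representatives modulo the symmetries that identify solutions $\lambda_{r,s}=\lambda_{r',s'}$ (the hyperelliptic involution $(r,s)\mapsto(-r,-s)$ and the half-period translations, together with the modular group acting on $\tau$). The surviving representatives are $(\tfrac13,0)$ from $N=3$ and $(\tfrac14,0),(\tfrac14,\tfrac14),(0,\tfrac14)$ from $N=4$, giving the four claimed solutions; one then checks directly from \eqref{625} that for each of these four the denominator never vanishes (the numerator vanishes to matching order, or $Z_{r,s}$ degenerates so that $\wp(p_{r,s}(\tau)|\tau)$ stays finite), confirming they really are pole-free.

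I expect the main obstacle to be the second step in the non-torsion case: showing that for \emph{every} non-torsion $(r,s)\notin\frac12\mathbb{Z}^2$ there is at least one $\tau\in\mathbb{H}$ with the denominator of \eqref{625} vanishing and the numerator not. The torsion case is controlled by the algebraic pole-count of Theorem \ref{thm number}, but the non-torsion case requires a genuinely analytic argument — one natural route is a degeneration/monodromy argument as $\tau\to i\infty$ or $\tau\to$ a cusp, using the explicit asymptotics of $\zeta$, $\eta_1$, $\eta_2$ and $\wp$ to show $Z_{r,s}(\tau)$ cannot avoid the three roots of $w^3-3\wp(\alpha)w-\wp'(\alpha)$ for all $\tau$ unless $(r,s)$ is special; alternatively one can invoke the fact that an entire pole-free $\lambda_{r,s}(t)$ would force strong rationality/finiteness constraints incompatible with an infinite (non-algebraic) monodromy orbit. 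Care is also needed with the convention in the Remark after Theorem \ref{thm number} about counting coincident poles of different branches, but since we only need ``zero poles'' that subtlety does not affect the final count.
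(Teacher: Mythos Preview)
Your torsion analysis (Step 2 for rational $(r,s)$) and the bookkeeping (Step 3) are correct and match the paper. The genuine gaps are elsewhere.

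First, your pole characterization in Step 1 is garbled: the denominator $Z_{r,s}^{(2)}(\tau)$ vanishing is \emph{not} the same as $\alpha(\tau)\in\Lambda_\tau$. These are two separate mechanisms (Theorem \ref{simple-zero}): a pole of $\lambda_{r,s}$ occurs either when $r+s\tau_0\in\Lambda_{\tau_0}$, or when $r+s\tau_0\notin\Lambda_{\tau_0}$ and $Z_{r,s}^{(2)}(\tau_0)=0$. You have merged them into one condition, and this matters because the two mechanisms are used in different parts of the argument.

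Second, the non-torsion case---which you correctly flag as the main obstacle---is not handled, and your suggested routes do not work as stated. The asymptotic idea fails: by Lemma \ref{infinity-behavior copy(1)}, $Z_{r,s}^{(2)}(\tau)\to 4\pi^3 is(1-s)(2s-1)\neq 0$ as $\tau\to\infty$ for generic real $s$, so cusp asymptotics alone produce no zero. The ``rationality/finiteness'' route is too vague to evaluate. The paper instead splits off complex $(r,s)$ first (Lemma \ref{complex-pair}): if $(r,s)\in\mathbb{C}^2\setminus\mathbb{R}^2$ there are infinitely many $\tau_0\in\mathbb{H}$ with $r+s\tau_0\in\Lambda_{\tau_0}$, giving poles via the first mechanism above---you do not mention this case at all. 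For real non-torsion $(r,s)$ the paper relies on Theorem C (proved in a companion paper): $Z_{r,s}^{(2)}$ has a zero in the $\Gamma_0(2)$-fundamental domain $F_0$ if and only if $(r,s)$ lies in one of the triangles $\triangle_1,\triangle_2,\triangle_3$. This immediately gives poles for those $(r,s)$; the remaining region $\triangle_0\setminus\mathbb{Q}^2$ (and the boundary segments) is then handled by exhibiting explicit matrices $\gamma\in\Gamma(2)$ such that $(s,r)\cdot\gamma$ lands in $\triangle_1\cup\triangle_2\cup\triangle_3$ modulo $\mathbb{Z}^2$, using Proposition \ref{Prop-II-2} to transport the pole. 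This $\Gamma(2)$-orbit argument, together with Theorem C, is the missing idea in your proposal.
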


\begin{remark}
We proved in \cite{Chen-Kuo-Lin2} that $\lambda_{\frac{1}{4},0}(t)=-\frac{1}{3}t^{\frac{1}{2}}$. Together with (\ref{4solution}) in Section 3, we immediately obtain
\[\lambda_{0,\frac{1}{4}}(t)=1+\frac{1}{3}(1-t)^{\frac{1}{2}},\quad \lambda_{\frac{1}{4},\frac{1}{4}}(t)=t+\frac{1}{3}[t(t-1)]^{\frac{1}{2}}.\]
On the other hand, in our original version of this paper, we also proved that among all the solutions $\lambda_{r,s}(t)$ to PVI$(\frac{9}{8},$ $\frac{-1}{8},\frac{1}{8},\frac{3}%
{8})$, $\lambda_{\frac{1}{3},0}(t)$ is the only solution which satisfies $\lambda(t)\notin\{0,1,t,\infty\}$ for all $t\in\mathbb{C}\setminus\{0,1\}$, which is a generalization of Theorem A-(i). After we communicated this statement to Eremenko, he and his coauthors \cite{Eremenko} found a simple proof of this statement and generalize it to the general parameters $(\alpha,\beta,\gamma,\delta)$. Therefore, we omit our proof of this statement in the current version. Together with their proof \cite{Eremenko}, it is known that $\lambda_{\frac{1}{3},0}(t)$ satisfies the following algebraic equation:
\[3\lambda^4-4t\lambda^3-4\lambda^3+6t\lambda^2-t^2=0.\]
\end{remark}

The paper is organized as follows: In \S 2, we introduce $Z_{r,s}^{(2)}(\tau)$
which is the denominator of (\ref{625}) and study its zeros. By connecting the
zeros of $Z_{r,s}^{(2)}(\tau)$ with the poles of $\lambda_{r,s}(t)$ (see
Theorem \ref{simple-zero}), we prove Theorem \ref{theorem1-7 copy(1)} in \S 2.
Next, we will count the number of the poles of an algebraic solution and the
explicit formulae are obtained in \S 3. Finally, by applying Theorem
\ref{thm number}, we prove Theorem \ref{main-thm copy(2)} in \S 4.

\section{Poles of solutions and zeros of premodular forms}

In this section, we are going to prove Theorem \ref{theorem1-7 copy(1)}.
Define $Z_{r,s}^{(2)}(\tau)$ to be the denominator of (\ref{625}), i.e.%
\begin{equation}
Z_{r,s}^{(2)}(\tau)\doteqdot Z_{r,s}(\tau)^{3}-3\wp(r+s\tau|\tau)Z_{r,s}%
(\tau)-\wp^{\prime}(r+s\tau|\tau). \label{II-5}%
\end{equation}
To study the poles of $\lambda_{r,s}(t)$, it is important to study the zeros
of $Z_{r,s}^{(2)}(\tau)$ for $\left(  r,s\right)  \in \mathbb{R}^{2}%
\backslash \frac{1}{2}\mathbb{Z}^{2}$. It was proved in \cite{CKLW} that%
\begin{equation}
Z_{r,s}(\tau)=\pm Z_{r^{\prime},s^{\prime}}(\tau)\Longleftrightarrow \left(
r,s\right)  \equiv \pm \left(  r^{\prime},s^{\prime}\right)  ,
\operatorname{mod}\mathbb{Z}^{2}, \label{Z-invariance}%
\end{equation}
which implies that%
\begin{equation}
Z_{r,s}^{(2)}(\tau)=\pm Z_{r^{\prime},s^{\prime}}^{(2)}(\tau
)\Longleftrightarrow \left(  r,s\right)  \equiv \pm \left(  r^{\prime},s^{\prime
}\right)  \operatorname{mod}\mathbb{Z}^{2}\text{.} \label{invariance}%
\end{equation}
From (\ref{Z-invariance})-(\ref{invariance}) and (\ref{625}), we proved in
\cite{Chen-Kuo-Lin2} that (the $\Longrightarrow$ part is not trivial)
\begin{equation}
\wp(p_{r,s}(\tau)|\tau)=\wp(p_{r^{\prime},s^{\prime}}(\tau)|\tau
)\Longleftrightarrow \left(  r,s\right)  \equiv \pm \left(  r^{\prime},s^{\prime
}\right)  \operatorname{mod}\mathbb{Z}^{2}\text{.} \label{invariant}%
\end{equation}
In particular, we have
\begin{equation}
\lambda_{r,s}(t)=\lambda_{r^{\prime},s^{\prime}}(t)\Longleftrightarrow
(r^{\prime},s^{\prime})\equiv \pm(r,s)\operatorname{mod}\mathbb{Z}^{2}\text{.}
\label{equal}%
\end{equation}
By (\ref{invariance})-(\ref{equal}), it is suitable to restrict $\left(
r,s\right)  $ on the set $[0,1]\times \lbrack0,\frac{1}{2}]\backslash \frac
{1}{2}\mathbb{Z}^{2}$. Define the four open triangles as follows:%
\[%
\begin{array}
[c]{l}%
\triangle_{0}:=\{(r,s)\mid0<r,s<\tfrac{1}{2},\text{ }r+s>\tfrac{1}{2}\},\\
\triangle_{1}:=\{(r,s)\mid \tfrac{1}{2}<r<1,\text{ }0<s<\tfrac{1}{2},\text{
}r+s>1\},\\
\triangle_{2}:=\{(r,s)\mid \tfrac{1}{2}<r<1,\text{ }0<s<\tfrac{1}{2},\text{
}r+s<1\},\\
\triangle_{3}:=\{(r,s)\mid r>0,\text{ }s>0,\text{ }r+s<\tfrac{1}{2}\}.
\end{array}
\]
Clearly $[0,1]\times \lbrack0,\frac{1}{2}]=\cup_{k=0}^{3}\overline
{\triangle_{k}}$. Remark that
\[
Z_{r,s}^{(2)}(\tau)\equiv0\text{ for }(r,s)\in \{(0,\tfrac{1}{2}),(\tfrac{1}%
{2},0),(\tfrac{1}{2},\tfrac{1}{2})\}+\mathbb{Z}^{2},
\]%
\[
Z_{r,s}^{(2)}(\tau)\equiv \infty \text{ for }(r,s)\in \mathbb{Z}^{2}.
\]
Define%
\begin{equation}
F_{0}:=\{ \tau \in \mathbb{H}\mid0\leq \operatorname{Re}\tau \leq1,\,|\tau
-\tfrac{1}{2}|\geq \tfrac{1}{2}\}. \label{g2}%
\end{equation}
It is known (cf. \cite{CKLW}) that $F_{0}$ is a fundamental domain for
$\Gamma_{0}\left(  2\right)  :=\{ \gamma=(a_{ij})\in SL(2,\mathbb{Z}%
)\,|\,a_{21}\equiv0 \operatorname{mod}2\}$. The following theorem gives us
information about the zeros of $Z_{r,s}^{(2)}(\tau)$ and plays an important
role to study the poles of $\lambda_{r,s}(t)$.\emph{\medskip}

\noindent \textbf{Theorem C.} \cite{Chen-Kuo-Lin3} \emph{Let }$(r,s)\in
\lbrack0,1]\times \lbrack0,\frac{1}{2}]\backslash \frac{1}{2}\mathbb{Z}^{2}%
$\emph{. Then }$Z_{r,s}^{(2)}(\tau)=0$\emph{ has a solution }$\tau$\emph{ in
}$F_{0}$\emph{ if and only if }$(r,s)\in \triangle_{1}\cup \triangle_{2}%
\cup \triangle_{3}$\emph{. } \emph{Furthermore, for any }$(r,s)\in \triangle
_{1}\cup \triangle_{2}\cup \triangle_{3}$\emph{, the solution }$\tau \in F_{0}%
$\emph{ is unique and satisfies }$\tau \in \mathring{F}_{0}$\emph{. In
particular, }$Z_{r,s}^{(2)}(\tau)\not =0$ \emph{for any }$\tau \in \partial
F_{0}$ \emph{and} $( r,s) \in \mathbb{R}^{2}\backslash \frac{1}{2}\mathbb{Z}%
^{2}$.\emph{\medskip}

We will use Theorem C to prove Theorem \ref{theorem1-7 copy(1)}. Our proof is
based on the following result to connect the poles of a solution
$\lambda_{r,s}(t)$ with zeros of $Z_{r,s}^{(2)}(\tau)$.

\begin{theorem}
\label{simple-zero}Fix any $(r,s)\in \mathbb{C}^{2}\backslash \frac{1}%
{2}\mathbb{Z}^{2}$ and $\tau_{0}\in \mathbb{H}$. Then $p_{r,s}(\tau_{0})=0$ in
$E_{\tau_{0}}$, or equivalently $t_{0}=t(\tau_{0})\not \in \{0,1,\infty \}$ is
a pole of $\lambda_{r,s}(t)$, if and only if either $r+s\tau_{0}\in
\Lambda_{\tau_{0}}$ or $r+s\tau_{0}\not \in \Lambda_{\tau_{0}}$ and
$Z_{r,s}^{(2)}(\tau_{0})=0$.
\end{theorem}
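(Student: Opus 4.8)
The plan is to establish the equivalence in Theorem~\ref{simple-zero} by a direct analysis of the right-hand side of \eqref{625}, keeping careful track of which terms blow up. First I would recall that $t_0=t(\tau_0)\notin\{0,1,\infty\}$ is a pole of $\lambda_{r,s}(t)$ exactly when $p_{r,s}(\tau_0)\in\Lambda_{\tau_0}$ (equivalently $p_{r,s}(\tau_0)=0$ in $E_{\tau_0}$), since by the Painlev\'e property $\wp(p_{r,s}(\tau)|\tau)$ is single-valued meromorphic on $\mathbb{H}$ with poles precisely at those $\tau$ with $p(\tau)\in\Lambda_\tau$, while the branch points $p(\tau)\in E_\tau[2]\setminus\Lambda_\tau$ correspond to $\lambda\in\{0,1,t\}$. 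So the whole statement reduces to: $\wp(p_{r,s}(\tau_0)|\tau_0)=\infty$ if and only if $\alpha(\tau_0)=r+s\tau_0\in\Lambda_{\tau_0}$, or $\alpha(\tau_0)\notin\Lambda_{\tau_0}$ and $Z^{(2)}_{r,s}(\tau_0)=0$.

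The case $\alpha(\tau_0)=r+s\tau_0\in\Lambda_{\tau_0}$ should be handled separately, because then $Z_{r,s}$, $\wp(\alpha)$, $\wp'(\alpha)$ in \eqref{625} are individually singular and the formula must be interpreted as a limit; here I would argue directly from Theorem~B's characterization (or from the known explicit behavior of $p_{r,s}$, e.g.\ the values $\lambda_{\frac14,0}(t)=-\frac13 t^{1/2}$ type formulas) that $p_{r,s}(\tau_0)\in\Lambda_{\tau_0}$, i.e.\ this is genuinely a pole. Alternatively, one can use the expression of \eqref{625} in terms of $Z_{r,s}$ alone after clearing the $\wp(\alpha),\wp'(\alpha)$ via the addition theorem, so that the right side becomes a rational function in $Z_{r,s}$ whose pole set is transparent even as $\alpha\to\Lambda_\tau$.

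The main case is $\alpha(\tau_0)=r+s\tau_0\notin\Lambda_{\tau_0}$, so that $\wp(\alpha(\tau_0))$, $\wp'(\alpha(\tau_0))$, and $Z_{r,s}(\tau_0)$ are all finite. Then in \eqref{625} the term $\wp(\alpha)$ is finite, and $\wp(p_{r,s}(\tau_0)|\tau_0)=\infty$ can only come from the denominator $Z^{(2)}_{r,s}(\tau_0)=Z_{r,s}^3-3\wp(\alpha)Z_{r,s}-\wp'(\alpha)$ vanishing. So the implication ``$Z^{(2)}_{r,s}(\tau_0)=0\Rightarrow$ pole'' requires showing the numerator $3\wp'(\alpha)Z_{r,s}^2+(12\wp^2(\alpha)-g_2)Z_{r,s}+3\wp(\alpha)\wp'(\alpha)$ does \emph{not} simultaneously vanish at $\tau_0$ --- i.e.\ that numerator and denominator have no common zero. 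I expect this non-vanishing of the resultant-type quantity to be the technical crux: one computes, using $Z^{(2)}_{r,s}=0$, that $\wp'(\alpha)=Z_{r,s}^3-3\wp(\alpha)Z_{r,s}$ and substitutes into the numerator, reducing it to a polynomial in $Z_{r,s}$ and $\wp(\alpha)$ that should factor through $Z^{(3)}_{r,s}:=Z_{r,s}^4-6\wp(\alpha)Z_{r,s}^2-\ldots$ or an analogous ``next'' premodular form; one then invokes that $Z^{(2)}_{r,s}$ and this companion form cannot share a zero (which is essentially the simple-zero statement of Theorem~C, or follows from the explicit ODE satisfied by $Z_{r,s}$ together with $\wp'^2=4\wp^3-g_2\wp-g_3$). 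Conversely, ``pole $\Rightarrow Z^{(2)}_{r,s}(\tau_0)=0$'' is the easy direction: since the numerator and $\wp(\alpha)$ are finite at $\tau_0$, the only way \eqref{625} produces $\infty$ is denominator zero.

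Finally I would assemble the four logical pieces: (a) pole $\iff p(\tau_0)\in\Lambda_{\tau_0}$ (Painlev\'e property); (b) if $\alpha(\tau_0)\in\Lambda_{\tau_0}$ then automatically $p(\tau_0)\in\Lambda_{\tau_0}$; (c) if $\alpha(\tau_0)\notin\Lambda_{\tau_0}$ and $Z^{(2)}_{r,s}(\tau_0)=0$ then $p(\tau_0)\in\Lambda_{\tau_0}$, using the numerator/denominator coprimality; (d) if $\alpha(\tau_0)\notin\Lambda_{\tau_0}$ and $Z^{(2)}_{r,s}(\tau_0)\neq0$ then $\wp(p(\tau_0)|\tau_0)$ is finite, so no pole. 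The hardest step is (c), and within it the coprimality of the numerator and denominator of \eqref{625}; I would prove it via the algebraic identity relating the numerator to $Z^{(2)}_{r,s}$ and its ``derivative-type'' companion and the fact from \cite{Chen-Kuo-Lin3} (Theorem~C's proof techniques) that zeros of $Z^{(2)}_{r,s}$ are simple.
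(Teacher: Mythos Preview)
Your decomposition into (a)--(d) is correct and matches the paper. The easy pieces (a) and (d) are fine. The two substantive steps, however, are not adequately handled.

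For step (c), the coprimality of numerator and denominator in \eqref{625}, your proposed route is circular. You suggest invoking ``the fact from \cite{Chen-Kuo-Lin3} (Theorem~C's proof techniques) that zeros of $Z^{(2)}_{r,s}$ are simple.'' But Theorem~C asserts \emph{uniqueness} of the zero in $F_0$, not simplicity; simplicity is Corollary~\ref{cor1} in the present paper, and its proof uses precisely the coprimality (Lemma~\ref{lem-II-1}) together with the Painlev\'e property. So you cannot use it as input. Moreover, your expectation that the substituted numerator ``should factor through $Z^{(3)}_{r,s}$'' is not what happens: if you actually carry out the substitution $\wp'(\alpha)=Z^{3}-3\wp(\alpha)Z$ into the numerator you get $Z\bigl[3(Z^{2}-\wp(\alpha))^{2}-g_{2}\bigr]$, and combining this with the Weierstrass relation $\wp'^{2}=4\wp^{3}-g_{2}\wp-g_{3}$ forces $g_{2}^{3}=27g_{3}^{2}$, a contradiction. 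This is exactly what the paper does in Lemma~\ref{lem-II-1}: after first ruling out $Z=0$ and $\wp'(\alpha)=0$ (either would force $(r,s)\in\tfrac12\mathbb{Z}^{2}$ via the nondegeneracy of $\left(\begin{smallmatrix}1&\tau\\ \eta_{1}&\eta_{2}\end{smallmatrix}\right)$), it runs a Euclidean-algorithm elimination on the two vanishing relations and arrives at $3\wp'(\alpha)^{2}(g_{2}^{3}-27g_{3}^{2})=0$. So your substitution idea is the right start, but you must push the algebra through to the discriminant rather than appeal to an external simple-zero fact.

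For step (b), the case $\alpha(\tau_{0})=r+s\tau_{0}\in\Lambda_{\tau_{0}}$, neither of your suggestions works: the explicit formula $\lambda_{\frac14,0}(t)=-\tfrac13 t^{1/2}$ covers only special $(r,s)$, and there is no way to ``clear $\wp(\alpha),\wp'(\alpha)$ via the addition theorem'' so that \eqref{625} becomes a rational function of $Z_{r,s}$ alone --- these are three genuinely independent functions of $\tau$. The paper handles this case by a direct Laurent expansion: normalize to $r+s\tau_{0}=0$, expand $\wp(\alpha)=\alpha^{-2}+O(\alpha^{2})$, $\wp'(\alpha)=-2\alpha^{-3}+O(\alpha)$, $Z_{r,s}=\alpha^{-1}(1-c_{0}\alpha+O(\alpha^{2}))$ with $c_{0}=r\eta_{1}(\tau_{0})+s\eta_{2}(\tau_{0})$, and compute that $\wp(p_{r,s}(\tau)|\tau)=-c_{0}/(3\alpha)+O(1)$. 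The key nontrivial point is $c_{0}\neq 0$: since $r=-s\tau_{0}$, the Legendre relation gives $c_{0}=-2\pi i s$, and $s\neq 0$ because $(r,s)=(0,0)$ would lie in $\tfrac12\mathbb{Z}^{2}$.
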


To prove Theorem \ref{simple-zero}, we have to prove that in the formula
(\ref{625}), the numerator and denominator can not vanish simultaneously.

\begin{lemma}
\label{lem-II-1}Fix any $(r,s)\in \mathbb{C}^{2}\backslash \frac{1}{2}%
\mathbb{Z}^{2}$ and $\tau \in \mathbb{H}$. Under the above notations, if
$Z_{r,s}^{(2)}(\tau)=0$,\ then%
\[
3\wp^{\prime}(\alpha)Z_{r,s}^{2}+\left(  12\wp^{2}(\alpha)-g_{2}\right)
Z_{r,s}+3\wp(\alpha)\wp^{\prime}(\alpha)\not =0.
\]

\end{lemma}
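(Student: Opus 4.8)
\textbf{Proof proposal for Lemma \ref{lem-II-1}.}

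The plan is to argue by contradiction: suppose that at some $(r,s)\in\mathbb{C}^2\setminus\frac12\mathbb{Z}^2$ and $\tau\in\mathbb{H}$ both the denominator $Z_{r,s}^{(2)}(\tau)=0$ and the numerator of the fraction in (\ref{625}) vanish. Writing $Z=Z_{r,s}(\tau)$, $\alpha=r+s\tau$, and abbreviating $\wp=\wp(\alpha|\tau)$, $\wp'=\wp'(\alpha|\tau)$, the hypotheses become the two polynomial relations
\begin{equation}
Z^3-3\wp Z-\wp'=0,\qquad 3\wp' Z^2+(12\wp^2-g_2)Z+3\wp\wp'=0. \label{prop-two-eqns}
\end{equation}
First I would note that $(r,s)\notin\frac12\mathbb{Z}^2$ forces $\alpha\notin E_\tau[2]$, so $\wp'\ne0$ and the cubic $4\wp^3-g_2\wp-g_3=\wp'^2\ne0$; this keeps all the elliptic quantities finite and nonzero, which is what makes the elimination below legitimate. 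The idea is then to eliminate $Z$ between the two equations in (\ref{prop-two-eqns}) and to show that the resulting constraint on $\alpha$ (equivalently on $\wp,\wp'$) cannot hold.

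The key computation is to take resultants. From the first relation, $Z^3=3\wp Z+\wp'$, so I can reduce the degree of the second relation's consequences modulo the cubic. Concretely, multiply the second equation by $Z$ and substitute $Z^3=3\wp Z+\wp'$ to get a quadratic in $Z$ with coefficients that are polynomials in $\wp,\wp',g_2$; combine it with the original quadratic to eliminate the $Z^2$ term, obtaining a linear equation $aZ+b=0$ with $a,b$ explicit polynomials in $\wp,\wp',g_2,g_3$. Then either $a\ne 0$, in which case $Z=-b/a$ can be substituted back into the cubic $Z^3-3\wp Z-\wp'=0$ to yield a single polynomial identity $P(\wp,\wp',g_2,g_3)=0$; using $\wp'^2=4\wp^3-g_2\wp-g_3$ to kill all even powers of $\wp'$ turns this into $P_0(\wp,g_2,g_3)+\wp' P_1(\wp,g_2,g_3)=0$, and since $1$ and $\wp'$ are linearly independent over $\mathbb{C}(\wp)$ (as $\wp'\notin\mathbb{C}(\wp)$ when $\alpha\notin E_\tau[2]$), both $P_0$ and $P_1$ must vanish identically in $\wp$ for fixed $\tau$ — but that is impossible because, generically in $\alpha$, $\wp(\alpha)$ takes infinitely many values, so $P_0\equiv P_1\equiv 0$ as polynomials in their first argument, which one checks directly fails (the leading coefficients are nonzero numerical constants or nonzero polynomials in $g_2,g_3$). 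The remaining degenerate branch $a=0$ must be handled separately: $a=0$ is itself a polynomial condition on $\wp,\wp',g_2,g_3$, and one feeds it together with $b=0$ (or with the two equations of (\ref{prop-two-eqns})) through the same reduction to reach the same contradiction.

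The main obstacle I anticipate is purely computational bookkeeping: the resultant of the cubic and the quadratic in $Z$ is a fairly large polynomial in $\wp,\wp',g_2,g_3$, and one must reduce it carefully using the Weierstrass cubic relation and keep track of which factors can legitimately be cancelled (only $\wp'$ and $4\wp^3-g_2\wp-g_3$, both nonzero here). A cleaner route, which I would try first, is to observe that the denominator $Z_{r,s}^{(2)}$ and the numerator have a structural meaning in terms of the GLE (\ref{505}): $Z_{r,s}^{(2)}(\tau)=0$ is precisely the condition that $\wp(p_{r,s}(\tau))=\infty$, i.e. $p_{r,s}(\tau)\in\Lambda_\tau$, while simultaneous vanishing of the numerator would make the right side of (\ref{625}) an indeterminate $\tfrac{0}{0}$ with no finite or infinite limit along the isomonodromy deformation — contradicting that $p_{r,s}(\tau)$ is a genuine (continuous, meromorphic after applying $\wp$) solution of the elliptic Painlevé VI equation as guaranteed by Theorem B and the Painlevé property. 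In other words, if both vanished, $\wp(p_{r,s}(\tau))$ would fail to extend meromorphically across $\tau$, which is forbidden. I expect the paper to take the direct algebraic elimination route since it is self-contained, but I would present the conceptual argument as the guiding principle and fall back to the explicit resultant computation for the verification.
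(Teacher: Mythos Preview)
Your overall strategy — assume both vanish and eliminate $Z$ — is exactly what the paper does, but two steps in your outline do not work as written.

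First, the claim that $(r,s)\notin\frac12\mathbb{Z}^2$ forces $\alpha\notin E_\tau[2]$ is false for complex $(r,s)$: for any $s\in\mathbb{C}$ the pair $(r,s)=(\tfrac12-s\tau,\,s)$ has $\alpha=\tfrac{\omega_1}{2}$ while $(r,s)\notin\frac12\mathbb{Z}^2$ whenever $s\neq0$. The paper handles this correctly by using \emph{both} assumed equations: if $Z=0$ the cubic gives $\wp'(\alpha)=0$, and conversely if $\wp'(\alpha)=0$ the quadratic (with $12\wp^2-g_2=2\wp''\neq0$ at a half-period) forces $Z=0$; then the two linear relations $r+s\tau=\tfrac{\omega_k}{2}+m+n\tau$ and $r\eta_1+s\eta_2=\zeta(\tfrac{\omega_k}{2})+m\eta_1+n\eta_2=\tfrac{\eta_k}{2}+m\eta_1+n\eta_2$ together with the nondegeneracy of $\begin{pmatrix}1&\tau\\ \eta_1&\eta_2\end{pmatrix}$ force $(r,s)\in\frac12\mathbb{Z}^2$. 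You need this joint argument, not the bare implication you stated.

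Second, and more seriously, your endgame is invalid. After elimination you are at a \emph{single} point $(\alpha,\tau)$, so $\wp$ and $\wp'$ are fixed complex numbers; the relation $P_0(\wp)+\wp' P_1(\wp)=0$ is one scalar equation, and linear independence of $1,\wp'$ over $\mathbb{C}(\wp)$ as \emph{functions} says nothing about this single evaluation. You cannot conclude $P_0=P_1=0$, let alone that they vanish identically. What actually happens — and this is the whole point — is that the elimination, done carefully, collapses to the explicit identity
\[
3\,(g_2^3-27g_3^2)\,(4\wp^3-g_2\wp-g_3)=3\,(g_2^3-27g_3^2)\,\wp'^2=0,
\]
which contradicts $\wp'\neq0$ and the nonvanishing of the discriminant. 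So the computation is not ``bookkeeping'' to be waved past; the specific factorization \emph{is} the proof. Your alternative ``conceptual'' route via the Painlev\'e property also fails: meromorphicity of $\wp(p_{r,s}(\tau)|\tau)$ is perfectly compatible with a $0/0$ in the formula (\ref{625}) resolving to a finite value or a pole, so it does not by itself exclude simultaneous vanishing.
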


\begin{proof}
In the following, we denote $Z_{r,s}(\tau)$ simply by $Z$. First we recall the
well-known result%
\begin{equation}
g_{2}^{3}-27g_{3}^{2}=16(e_{1}-e_{2})^{2}(e_{2}-e_{3})^{2}(e_{3}-e_{1}%
)^{2}\not =0. \label{II-10}%
\end{equation}
Assume by contradiction that
\begin{equation}
Z_{r,s}^{(2)}(\tau)=Z^{3}-3\wp(\alpha)Z-\wp^{\prime}(\alpha)=0 \label{II-35}%
\end{equation}
and%
\begin{equation}
3\wp^{\prime}(\alpha)Z^{2}+\left(  12\wp^{2}(\alpha)-g_{2}\right)
Z+3\wp(\alpha)\wp^{\prime}(\alpha)=0. \label{II-6}%
\end{equation}

First we claim:%
\begin{equation}
Z(\tau)\not =0\text{ and }\wp^{\prime}(\alpha)\not =0\text{.} \label{II-8}%
\end{equation}
If $Z(\tau)=0$, then (\ref{II-35}) gives $\wp^{\prime}(\alpha)=0$, namely
\begin{equation}
\alpha=r+s\tau \in E_{\tau}[2]\setminus \Lambda_{\tau}\text{ and }Z(\tau)=0.
\label{II-7}%
\end{equation}
So we may assume $r+s\tau=\frac{\omega_{k}}{2}+m+n\tau$ for some $\left(
m,n\right)  \in \mathbb{Z}^{2}$ and $k\in \{1,2,3\}$. Consequently,
\[
\left(  r-m\right)  +\left(  s-n\right)  \tau-\frac{\omega_{k}}{2}=0,
\]
\[
\left(  r-m\right)  \eta_{1}+\left(  s-n\right)  \eta_{2}-\frac{\eta_{k}}%
{2}=0,
\]
which implies $(r,s)\in \frac{1}{2}\mathbb{Z}^{2}$ because the non-degeneracy
of $%
\begin{pmatrix}
1 & \tau \\
\eta_{1}(\tau) & \eta_{2}(\tau)
\end{pmatrix}
$, a contradiction. Similarly, if $\wp^{\prime}(\alpha)=0$, then $12\wp
^{2}(\alpha)-g_{2}=2\wp^{\prime \prime}(\alpha)\not =0$ and so (\ref{II-6})
gives $Z(\tau)=0$, again a contradiction. This proves the claim.

Now we apply the Euclidean algorithm for (\ref{II-35}) and (\ref{II-6}).
Multiplying (\ref{II-6}) by $Z$, (\ref{II-35}) by $3\wp^{\prime}(\alpha)$ and
adding them together, we obtain%
\begin{equation}
\left(  12\wp^{2}(\alpha)-g_{2}\right)  Z^{2}+12\wp(\alpha)\wp^{\prime}%
(\alpha)Z+3\wp^{\prime}(\alpha)^{2}=0. \label{II-34}%
\end{equation}
By (\ref{II-34}), (\ref{II-6}) and (\ref{II-33}), we can eliminate $Z^{2}$
term and obtain
\begin{equation}
\left[  12g_{2}\wp(\alpha)^{2}+36g_{3}\wp(\alpha)+g_{2}^{2}\right]
Z=-3\wp^{\prime}(\alpha)\left[  2g_{2}\wp(\alpha)+3g_{3}\right]  .
\label{II-9}%
\end{equation}
Consequently, multiplying (\ref{II-6}) by $(12g_{2}\wp(\alpha)^{2}+36g_{3}%
\wp(\alpha)+g_{2}^{2})^{2}$ and using (\ref{II-9}) lead to (write
$x=\wp(\alpha)$ for convenience)
\begin{align}
&  9(4x^{3}-g_{2}x-g_{3})(2g_{2}x+3g_{3})^{2}+x(12g_{2}x^{2}+36g_{3}%
x+g_{2}^{2})^{2}\label{II-11}\\
&  -(12x^{2}-g_{2})(2g_{2}x+3g_{3})(12g_{2}x^{2}+36g_{3}x+g_{2}^{2}%
)=0.\nonumber
\end{align}
A straightforward calculation shows that (\ref{II-11}) is exactly%
\[
0=3(g_{2}^{3}-27g_{3}^{2})(4x^{3}-g_{2}x-g_{3})=3\wp^{\prime}(\alpha
)^{2}(g_{2}^{3}-27g_{3}^{2}),
\]
which contradicts to (\ref{II-10}) and (\ref{II-8}).
\end{proof}

\begin{corollary}
\label{cor1}For any $\left(  r,s\right)  \in \mathbb{C}^{2}\backslash \frac
{1}{2}\mathbb{Z}^{2}$, any zero of $Z_{r,s}^{( 2) }(\tau)$ is simple.
\end{corollary}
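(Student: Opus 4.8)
The plan is to combine Lemma \ref{lem-II-1} with a direct differentiation argument. Recall from \eqref{II-5} that
\[
Z_{r,s}^{(2)}(\tau)=Z^{3}-3\wp(\alpha)Z-\wp^{\prime}(\alpha),
\]
where $Z=Z_{r,s}(\tau)$ and $\alpha=\alpha(\tau)=r+s\tau$. Suppose $\tau_{0}$ is a zero of $Z_{r,s}^{(2)}$; we must show $\tfrac{d}{d\tau}Z_{r,s}^{(2)}(\tau_{0})\neq 0$. First I would observe that at such a $\tau_{0}$ we automatically have $\alpha(\tau_{0})\notin\Lambda_{\tau_{0}}$ (otherwise $Z_{r,s}$, $\wp(\alpha)$ and $\wp'(\alpha)$ would all blow up, not vanish), so $\wp(\alpha)$, $\wp'(\alpha)$, $Z_{r,s}$ and their $\tau$-derivatives are all finite there. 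Then I differentiate the cubic expression in $\tau$, using the chain rule through both the explicit $\tau$-dependence of $\wp,\wp',\zeta$ and the variable $\alpha=r+s\tau$.

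The key computational input is the derivative of $Z_{r,s}(\tau)=\zeta(\alpha|\tau)-r\eta_{1}(\tau)-s\eta_{2}(\tau)$. One should be able to verify (this is a standard identity for the Hecke-type function in \eqref{Hecke}, and presumably recorded earlier in the companion papers) that
\[
\frac{d}{d\tau}Z_{r,s}(\tau)=-s\,\wp(\alpha|\tau)\quad\text{up to an explicit multiple},
\]
and similarly $\tfrac{d}{d\tau}\wp(\alpha(\tau)|\tau)$ and $\tfrac{d}{d\tau}\wp'(\alpha(\tau)|\tau)$ can be written in terms of $\wp(\alpha),\wp'(\alpha),\wp''(\alpha)$ and $s$ via the Weierstrass equations $\wp'^{2}=4\wp^{3}-g_{2}\wp-g_{3}$ and $\wp''=6\wp^{2}-\tfrac{g_{2}}{2}$ together with the heat-type identity $\partial_{\tau}\wp=\tfrac{1}{4\pi i}(\text{something in }\wp,\wp',\wp'')$. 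After substituting all of these into $\tfrac{d}{d\tau}Z_{r,s}^{(2)}(\tau_{0})$ and simplifying modulo the relation $Z^{3}=3\wp(\alpha)Z+\wp'(\alpha)$ (which holds at $\tau_{0}$), I expect the derivative to collapse to a constant multiple of
\[
3\wp^{\prime}(\alpha)Z^{2}+\left(12\wp^{2}(\alpha)-g_{2}\right)Z+3\wp(\alpha)\wp^{\prime}(\alpha),
\]
i.e. precisely the numerator appearing in \eqref{625} and in Lemma \ref{lem-II-1}. By Lemma \ref{lem-II-1} this numerator is nonzero at $\tau_{0}$, and the constant multiplier should be checked to be nonzero as well (it will be a product of elementary factors like $s$, $2\pi i$, and possibly $\wp'(\alpha)$ or $e_{i}-e_{j}$ differences, none of which vanish — the case $s=0$ handled separately, since then $\alpha=r$ is $\tau$-independent and the computation simplifies). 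This forces $\tfrac{d}{d\tau}Z_{r,s}^{(2)}(\tau_{0})\neq 0$, so the zero is simple.

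The main obstacle is the bookkeeping in the derivative computation: getting the correct formula for $\partial_{\tau}$ acting on $\wp(\alpha(\tau)|\tau)$, $\wp'(\alpha(\tau)|\tau)$ and $Z_{r,s}(\tau)$ (there are two sources of $\tau$-dependence and the quasi-periodicity corrections $r\eta_{1}+s\eta_{2}$ must be handled carefully), and then checking that the resulting expression really is proportional to the Lemma \ref{lem-II-1} numerator rather than some other combination. If the proportionality is not exact, the fallback is to note that $\tfrac{d}{d\tau}Z_{r,s}^{(2)}$ lies in the ideal generated by $Z_{r,s}^{(2)}$ and the Lemma \ref{lem-II-1} numerator inside the relevant ring of functions; since the first vanishes at $\tau_{0}$ and the second does not, one still concludes simplicity provided the coefficient of the numerator is shown to be nonvanishing. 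Either way, Lemma \ref{lem-II-1} is doing the real work and the corollary is essentially a formal consequence of it once the chain-rule identities are in place.
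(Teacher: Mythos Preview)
Your approach is genuinely different from the paper's, which is a two-line argument avoiding differentiation entirely. The paper argues as follows: by the Painlev\'e property, $\wp(p_{r,s}(\tau)|\tau)$ is meromorphic on $\mathbb{H}$; by \eqref{625} together with Lemma~\ref{lem-II-1}, a zero of $Z_{r,s}^{(2)}$ of order $m$ forces $\wp(p_{r,s}(\tau)|\tau)$ to have a pole of order exactly $m$ there; but via \eqref{tr} this corresponds to a pole of the PVI solution $\lambda_{r,s}(t)$, and it is a standard fact (cited from \cite{GP}) that every pole of a solution of PVI$(\tfrac{9}{8},\tfrac{-1}{8},\tfrac{1}{8},\tfrac{3}{8})$ is simple. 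Hence $m=1$. No chain-rule computation is needed; the Painlev\'e structure of the equation does all the work.

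Your direct-differentiation route would be more self-contained if it went through, but as written it has a real gap: the crucial computation is only conjectured, not carried out. In particular, the claim that $\tfrac{d}{d\tau}Z_{r,s}(\tau)$ equals $-s\wp(\alpha)$ ``up to an explicit multiple'' is not correct as stated --- the total $\tau$-derivative also carries $\partial_{\tau}\zeta(\alpha|\tau)-r\eta_{1}'(\tau)-s\eta_{2}'(\tau)$, and the $\partial_{\tau}$ pieces of $\zeta,\wp,\wp'$ are governed by nontrivial heat-type identities bringing in $\eta_{1}$ (equivalently $E_{2}$) and the derivatives $g_{2}',g_{3}'$. Whether, after substituting all of these and reducing modulo $Z_{r,s}^{(2)}=0$, one lands precisely on a nonzero scalar times the Lemma~\ref{lem-II-1} numerator is exactly the content requiring proof, and your ``fallback'' does not bypass this since it still hinges on identifying that scalar coefficient and showing it is nonzero (including in the $s=0$ case you yourself flag). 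The paper's route sidesteps this entire calculation by importing the simple-pole property from PVI theory.
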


\begin{proof}
The Painlev\'{e} property implies that $\wp(p_{r,s}(\tau)|\tau)$ is
meromorphic in $\mathbb{H}$. Therefore, this assertion follows from Lemma
\ref{lem-II-1} and the fact that any pole of any solution of PVI$(\frac{9}%
{8},\frac{-1}{8},\frac{1}{8},\frac{3}{8})$ must be simple (see e.g.
\cite[Proposition 1.4.1]{GP}).
\end{proof}

Now we could prove Theorem \ref{simple-zero} by Theorem B.

\begin{proof}
[Proof of Theorem \ref{simple-zero}]By the expression (\ref{625}) of
$\wp(p_{r,s}(\tau)|\tau)$, we see that $p_{r,s}(\tau_{0})$ $=0$ in
$E_{\tau_{0}}$ implies either $r+s\tau_{0}\in \Lambda_{\tau_{0}}$ or
$Z_{r,s}^{(2)}(\tau_{0})=0$.

So it suffices to prove the other direction. If $r+s\tau_{0}\in \Lambda
_{\tau_{0}}$, without loss of generality we may assume $\alpha(\tau
_{0})=r+s\tau_{0}=0$. By letting $\alpha=\alpha(\tau)\rightarrow$ $\alpha
(\tau_{0})=0$ as $\tau \rightarrow \tau_{0}$, we have%
\begin{equation}
\wp(\alpha)=\frac{1}{\alpha^{2}}+O(\alpha^{2}),\text{ }\wp^{\prime}%
(\alpha)=\frac{-2}{\alpha^{3}}+O(\alpha), \label{II-172-0}%
\end{equation}%
\begin{equation}
Z_{r,s}=\frac{1}{\alpha}\left(  1-c_{0}\alpha-c_{1}\alpha^{2}+O(\alpha
^{3})\right)  , \label{II-172}%
\end{equation}
where%
\begin{equation}
c_{0}\doteqdot r\eta_{1}(\tau_{0})+s\eta_{2}(\tau_{0})=-2\pi is\not =0
\label{II-176-0}%
\end{equation}
and
\[
c_{1}\doteqdot \frac{r}{s}\eta_{1}^{\prime}(\tau_{0})+\eta_{2}^{\prime}%
(\tau_{0}).
\]
Here $r=-s\tau_{0}$ and the Legendre relation $\tau \eta_{1}(\tau)-\eta
_{2}(\tau)=2\pi i$ are used in (\ref{II-176-0}). Then we deduce from
(\ref{II-5}) and (\ref{II-172-0})-(\ref{II-176-0}) that%
\begin{equation}
Z_{r,s}^{(2)}(\tau)=\frac{3c_{0}^{2}}{\alpha}-(c_{0}^{3}-6c_{0}c_{1}%
)+O(\alpha), \label{II-173}%
\end{equation}%
\begin{align*}
&  3\wp^{\prime}(\alpha)Z_{r,s}^{2}+\left(  12\wp^{2}(\alpha)-g_{2}\right)
Z_{r,s}+3\wp(\alpha)\wp^{\prime}(\alpha)\\
&  =-\frac{6c_{0}^{2}}{\alpha^{3}}-\frac{12c_{0}c_{1}}{\alpha^{2}}%
+O(\alpha^{-1}),
\end{align*}
and so (\ref{625}) gives%
\begin{equation}
\wp(p_{r,s}(\tau)|\tau)=-\frac{c_{0}}{3\alpha(\tau)}+O(1)\rightarrow
\infty \text{ as }\tau \rightarrow \tau_{0}, \label{II-174}%
\end{equation}
which implies $p_{r,s}(\tau_{0})=0$ in $E_{\tau_{0}}$, namely $t_{0}$ is a
pole of $\lambda_{r,s}(t)$ whenever $r+s\tau_{0}\in \Lambda_{\tau_{0}}$.

If $\alpha(\tau_{0})=r+s\tau_{0}\not \in \Lambda_{\tau_{0}}$ and
$Z_{r,s}^{(2)}(\tau_{0})=0$, then it follows from (\ref{625}) and Lemma
\ref{lem-II-1} that $\wp(p_{r,s}(\tau_{0})|\tau_{0})=\infty$.
\end{proof}

We need another lemma for the proof of Theorem \ref{theorem1-7 copy(1)}.

\begin{lemma}
\label{lem2}Let $\tau \in \mathbb{H}$, $\tau^{\prime}=\gamma \cdot \tau$ and
$(s^{\prime},r^{\prime})=(s,r)\cdot \gamma^{-1}$ for some $\gamma \in
SL(2,\mathbb{Z})$. Then $Z_{r,s}^{(2)}(\tau)=0$ if and only if $Z_{r^{\prime
},s^{\prime}}^{(2)}(\tau^{\prime})=0$. In particular, $Z_{r,s}^{(2)}%
(\tau)\not =0$ for any $( r,s) \in \mathbb{R}^{2}\backslash \frac{1}%
{2}\mathbb{Z}^{2}$ and $\tau \in SL(2,\mathbb{Z})\cdot i\mathbb{R}^{+}$.
\end{lemma}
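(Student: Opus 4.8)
The plan is to establish the first statement—the modular covariance of the vanishing locus of $Z_{r,s}^{(2)}$—by tracking how each building block in (\ref{II-5}) transforms under the $SL(2,\mathbb{Z})$-action, and then to deduce the second statement by combining it with Theorem~C. For the covariance, the key fact is the transformation law of the Hecke–type function $Z_{r,s}(\tau)$ defined in (\ref{Hecke}). Writing $\gamma=\begin{pmatrix}a&b\\ c&d\end{pmatrix}$, $\tau'=\frac{a\tau+b}{c\tau+d}$, and $(s',r')=(s,r)\gamma^{-1}$, one checks from the classical transformation formulas for $\zeta(z|\tau)$ and for the quasi-periods $\eta_1,\eta_2$ (together with the Legendre relation $\tau\eta_1-\eta_2=2\pi i$) that
\[
Z_{r',s'}(\tau')=(c\tau+d)\,Z_{r,s}(\tau),
\]
because the combination $\zeta(r+s\tau|\tau)-r\eta_1(\tau)-s\eta_2(\tau)$ is exactly the one designed to be modular of weight $1$; the point is that $r'+s'\tau'$ equals $(c\tau+d)^{-1}(r+s\tau)$ up to a lattice translation, and the correction terms $-r'\eta_1(\tau')-s'\eta_2(\tau')$ absorb both the $\eta_k$-shifts coming from that lattice translation and the inhomogeneous part of the $\zeta$-transformation. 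Similarly $\wp(r+s\tau|\tau)$ has weight $2$ and $\wp'(r+s\tau|\tau)$ has weight $3$ under this action, i.e. $\wp(\alpha'|\tau')=(c\tau+d)^2\wp(\alpha|\tau)$ and $\wp'(\alpha'|\tau')=(c\tau+d)^3\wp'(\alpha|\tau)$ with $\alpha=r+s\tau$, $\alpha'=r'+s'\tau'$.

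Granting these weight assignments, each of the three terms $Z_{r,s}^3$, $3\wp(\alpha)Z_{r,s}$, $\wp'(\alpha)$ in (\ref{II-5}) scales by the same factor $(c\tau+d)^3$, so
\[
Z_{r',s'}^{(2)}(\tau')=(c\tau+d)^{3}\,Z_{r,s}^{(2)}(\tau).
\]
Since $c\tau+d\neq 0$ for $\tau\in\mathbb{H}$, this gives $Z_{r,s}^{(2)}(\tau)=0\iff Z_{r',s'}^{(2)}(\tau')=0$, which is the first assertion. (One should also note that $(s',r')\in\mathbb{R}^2$ whenever $(s,r)\in\mathbb{R}^2$, and that $(r',s')\in\frac12\mathbb{Z}^2\iff(r,s)\in\frac12\mathbb{Z}^2$, so the hypotheses are preserved; the identity (\ref{Z-invariance}) already records that $Z_{r,s}$, and hence the property "$Z_{r,s}^{(2)}$ has a zero at a given $\tau$", only depends on $(r,s)\bmod\mathbb{Z}^2$ up to sign, so one may freely reduce $(r',s')$ modulo $\mathbb{Z}^2$.)

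For the "in particular" clause, let $(r,s)\in\mathbb{R}^2\setminus\frac12\mathbb{Z}^2$ and $\tau=\gamma\cdot(i\rho)$ for some $\rho>0$ and $\gamma\in SL(2,\mathbb{Z})$. Suppose toward a contradiction that $Z_{r,s}^{(2)}(\tau)=0$. Applying the covariance just proved with $\gamma^{-1}$ gives $Z_{r',s'}^{(2)}(i\rho)=0$ where $(s',r')=(s,r)\gamma$; after reducing $(r',s')$ into the fundamental region $[0,1]\times[0,\tfrac12]$ (using (\ref{invariance}), which changes $Z_{r',s'}^{(2)}$ only by a sign and a shift of $\tau$ by a lattice vector, hence preserves the zero set on the imaginary axis once one also notes $i\rho$ lies on $\partial F_0$), Theorem~C applies: it asserts precisely that $Z_{\tilde r,\tilde s}^{(2)}(\tau)\neq 0$ for every $\tau\in\partial F_0$ and every admissible $(\tilde r,\tilde s)$, and the positive imaginary axis is contained in $\partial F_0$ (it is the geodesic boundary $|\tau-\tfrac12|=\tfrac12$... wait, rather the segment $\operatorname{Re}\tau=0$, which is indeed part of $\partial F_0$). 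This contradiction proves $Z_{r,s}^{(2)}(\tau)\neq 0$ on all of $SL(2,\mathbb{Z})\cdot i\mathbb{R}^+$.

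The main obstacle will be pinning down the exact transformation formula for $Z_{r,s}(\tau)$, including getting every $\eta_k$-correction and every sign right when $r+s\tau$ must be translated back into a standard fundamental domain for $\Lambda_\tau$; this is a bookkeeping computation with the quasi-periodicity (\ref{40-2}) and the Legendre relation, and it is the only place where real care is needed—once the weight-$1$ covariance of $Z_{r,s}$ is in hand, the weight-$3$ covariance of $Z_{r,s}^{(2)}$ and the reduction to Theorem~C are formal. A secondary point to handle cleanly is the bookkeeping in the reduction step showing that the orbit $SL(2,\mathbb{Z})\cdot i\mathbb{R}^+$ meets $F_0$ exactly along $\partial F_0$, so that Theorem~C is genuinely applicable after conjugating by $\gamma$.
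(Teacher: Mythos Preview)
Your proposal is correct and follows essentially the same route as the paper: establish the weight-$3$ covariance $Z_{r',s'}^{(2)}(\tau')=(c\tau+d)^3 Z_{r,s}^{(2)}(\tau)$ from the weight-$1$ law for $Z_{r,s}$ and the standard weights of $\wp,\wp'$, then for the second assertion pull $\tau$ back to $i\mathbb{R}^+\subset\partial F_0$ via some $\gamma\in SL(2,\mathbb{Z})$ and invoke Theorem~C. The paper is slightly more economical in that it simply cites the identity $Z_{r',s'}(\tau')=(c\tau+d)Z_{r,s}(\tau)$ from \cite{CKLW} rather than re-deriving it, and it does not need your ``secondary point'' about how the full orbit meets $F_0$: once you have transported the zero to a point on $i\mathbb{R}^+$, the fact that $i\mathbb{R}^+\subset\partial F_0$ is immediate from the definition (\ref{g2}), and Theorem~C applies directly. (One small slip: (\ref{invariance}) changes $Z_{r,s}^{(2)}$ by a sign when $(r,s)$ is shifted in $\mathbb{Z}^2$; there is no shift of $\tau$ involved.)
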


\begin{proof}
Consider the pair $(z,\tau)\in \mathbb{C\times H}$ and $z=r+s\tau$. For any
$\gamma=%
\begin{pmatrix}
a & b\\
c & d
\end{pmatrix}
\in SL(2,\mathbb{Z})$, conventionally $\gamma$ can act on $\mathbb{C\times H}$
by
\[
\gamma(z,\tau)\doteqdot(\frac{z}{c\tau+d},\gamma \cdot \tau)=(\frac{z}{c\tau
+d},\frac{a\tau+b}{c\tau+d}).
\]
Then%
\begin{equation}
\frac{z}{c\tau+d}=\frac{r+s\tau}{c\tau+d}=r^{\prime}+s^{\prime}\tau^{\prime
},\text{ where }\tau^{\prime}=\gamma \cdot \tau \text{, }(s^{\prime},r^{\prime
})=(s,r)\cdot \gamma^{-1}. \label{II-31-0}%
\end{equation}
Using
\begin{equation}
\wp \left(  \left.  r^{\prime}+s^{\prime}\tau^{\prime}\right \vert \tau^{\prime
}\right)  =\left(  c\tau+d\right)  ^{2}\wp \left(  r+s\tau|\tau \right)  ,
\label{II-30}%
\end{equation}
we proved in \cite{CKLW} that%
\begin{equation}
Z_{r^{\prime},s^{\prime}}(\tau^{\prime})=(c\tau+d)Z_{r,s}(\tau). \label{II-31}%
\end{equation}
Together with (\ref{II-30}), (\ref{II-31}) and the fact that $g_{2}(\tau)$ is
a modular form of weight $4$, we easily derive from (\ref{II-5}) that%
\begin{equation}
Z_{r^{\prime},s^{\prime}}^{(2)}(\tau^{\prime})=(c\tau+d)^{3}Z_{r,s}^{(2)}%
(\tau) \label{II-32-0}%
\end{equation}
and%
\begin{equation}
\wp \left(  p_{r^{\prime},s^{\prime}}(\tau^{\prime})|\tau^{\prime}\right)
=(c\tau+d)^{2}\wp \left(  p_{r,s}(\tau)|\tau \right)  . \label{II-32}%
\end{equation}
In particular, $Z_{r,s}^{(2)}(\tau)=0$ if and only if $Z_{r^{\prime}%
,s^{\prime}}^{(2)}(\tau^{\prime})=0$.

Now fix any $( r,s) \in \mathbb{R}^{2}\backslash \frac{1}{2}\mathbb{Z}^{2}$.
Suppose $Z_{r,s}^{(2)}(\tau_{0})=0$ for some $\tau_{0}\in SL(2,\mathbb{Z}%
)\cdot i\mathbb{R}^{+}$, then $\tau_{0}=\gamma \cdot \tau$ for some $\tau \in
i\mathbb{R}^{+}$ and $\gamma \in SL(2,\mathbb{Z})$. Let $(s^{\prime},r^{\prime
})=(s,r)\cdot \gamma$. Then we have $(r^{\prime},s^{\prime})\in \mathbb{R}%
^{2}\backslash \frac{1}{2}\mathbb{Z}^{2}$ and $( c\tau+d) ^{3}Z_{r^{\prime
},s^{\prime}}^{(2)}(\tau)=Z_{r,s}^{(2)}(\tau_{0})=0$. But by Theorem C,
$Z_{r^{\prime},s^{\prime}}^{(2)}(\tau)\not =0$, a contradiction.
\end{proof}

\begin{proof}
[Proof of Theorem \ref{theorem1-7 copy(1)}]Let $\lambda \left(  t\right)  $ be
an unitary solution of PVI$(\frac{9}{8},\frac{-1}{8},\frac{1}{8},\frac{3}{8}%
)$, i.e. $\lambda(t) =\lambda_{r,s}(t) $ for some $( r,s) \in \mathbb{R}%
^{2}\backslash \frac{1}{2}\mathbb{Z}^{2}$. Suppose $t_{0}\in \mathbb{R}%
\backslash \{0,1\}$ is a pole of $\lambda(t)$. Recall $t(\tau)  $
in (\ref{tr}). A result in the theory of the modular form says that $t\left(
i\mathbb{R}^{+}\right)  $ $=\left(  0,1\right)  $, $t\left(  S\cdot
i\mathbb{R}^{+}\right)  =( 1,+\infty)$ and $t\left(  U\cdot i\mathbb{R}%
^{+}\right)  =( -\infty,0) $ where $S=%
\begin{pmatrix}
1 & 1\\
0 & 1
\end{pmatrix}
$ and $U=%
\begin{pmatrix}
1 & 0\\
2 & 1
\end{pmatrix}
$. See e.g. \cite{AK,Lang2}. Thus there exists $\tau_{0}\in SL(2,\mathbb{Z}%
)\cdot i\mathbb{R}^{+}$ such that $t_{0}=t(\tau_{0})$ and $p_{r,s}(\tau
_{0})=0$ in $E_{\tau_{0}}$. By Theorem \ref{simple-zero} and $r+s\tau
_{0}\not \in \Lambda_{\tau_{0}}$, we obtain $Z_{r,s}^{(2)}(\tau_{0})=0$, which yields a
contradiction with Lemma \ref{lem2}. Therefore, $\lambda(t)$ has no poles in
$\mathbb{R}\backslash \{0,1\}$.
\end{proof}

\section{Poles of Algebraic solutions}

In this section, we want to find the number of poles of algebraic solutions.
For PVI$(\frac{9}{8},\frac{-1}{8},\frac{1}{8},\frac{3}{8})$, it is well known
that a solution is algebraic if and only if its monodromy group is finite, and
the monodromy group of an algebraic solution is always the dihedral group
$D_{N}$ of order $2N$ for some $N\in \mathbb{N}_{\geq3}$. In this case, by
Theorem B, any branch of this solution must be one of $\lambda_{r,s}(t) $
where $\left(  r,s\right)  \in Q_{N}$, the set of N-torsion points. For the
classification and related subjects of algebraic solutions of Painlev\'{e} VI
equation, we refer to \cite{Boalch,Dubrovin-Mazzocco,Lisovyy-Tykhyy,Mazzocco}
and references therein.

To count the number of poles, we have to know how many branches of an
algebraic solution might have. A branch of a solution $\lambda(t)$ might be considered a single-valued meromorphic function (still
denoted by $\lambda(t) $) restricted on the simply connected domain
$\mathbb{C}\backslash(-\infty,1]$ or equivalently, the single-valued
meromorphic function $\wp( p( \tau) |\tau) $ restricted on a fundamental
domain of $\Gamma( 2) $ (because $t( \tau) $ is invariant under the action of
$\Gamma( 2) $). Thus, two branches $\lambda_{r,s}( t) $ and $\lambda
_{r^{\prime},s^{\prime}}( t) $, $t\in \mathbb{C}\backslash(-\infty,1]$, belong
to the same solution if $\lambda_{r^{\prime},s^{\prime}}( t) $ is the analytic
continuation of $\lambda_{r,s}( t) $ along a closed path cross the axis
$(-\infty,1]$. We note that for any algebraic solution, $\lambda_{r,s}( t) $
has no poles on $\mathbb{R}\backslash \{ 0,1 \} $ by Theorem
\ref{theorem1-7 copy(1)}. Hence whether a branch is considered as defined on
$\mathbb{C}\backslash((-\infty, 0]\cup[1,+\infty))$ or $\mathbb{C}\backslash(-\infty,1]$ does not
affect our calculation below.

\begin{remark}
We recall that $\Gamma(N)$ is the $N$-principal congruence subgroup of
$SL(2,\mathbb{Z})$, defined by%
\[
\Gamma(N):=\{ \gamma \in SL(2,\mathbb{Z})|\gamma \equiv I_{2}\operatorname{mod}%
N\}.
\]
It is known that $e_{k}(  \tau )  $, $k=1,2,3$, are modular forms of
weight 2 with respect to $\Gamma(2)$. We refer to \cite{Lang2} for the basic
theory of modular forms.
\end{remark}

\begin{proposition}
\label{Prop-II-2}$\lambda_{r,s}(t)  $ and $\lambda_{r^{\prime
},s^{\prime}}(t)$ belong to the same solution of PVI$(\frac
{9}{8}$, $\frac{-1}{8},\frac{1}{8},\frac{3}{8})$ if and only if $(s,r)\equiv
\pm(s^{\prime},r^{\prime})\cdot \gamma$ $\operatorname{mod}\mathbb{Z}^{2}$ by
some $\gamma \in \Gamma(2) $.
\end{proposition}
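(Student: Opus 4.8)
The plan is to translate the statement ``$\lambda_{r,s}(t)$ and $\lambda_{r',s'}(t)$ belong to the same solution'' into a statement about analytic continuation of $\wp(p(\tau)|\tau)$ on $\mathbb{H}$, and then use the transformation behavior recorded in Lemma \ref{lem2}. First I would recall that, as explained in the paragraph preceding the proposition, a branch of a solution $\lambda(t)$ on $\mathbb{C}\setminus(-\infty,1]$ corresponds to the single-valued meromorphic function $\wp(p(\tau)|\tau)$ on a fundamental domain of $\Gamma(2)$, since $t(\tau)$ in (\ref{tr}) is $\Gamma(2)$-invariant. Two branches $\lambda_{r,s}(t)$ and $\lambda_{r',s'}(t)$ then belong to the same solution precisely when the second is the analytic continuation of the first along a closed loop in $\mathbb{C}\setminus\{0,1\}$; lifting this loop to $\mathbb{H}$, such a loop corresponds exactly to acting by an element $\gamma\in\Gamma(2)$, i.e.\ the two branches agree (as values of the multivalued function) iff $\wp(p_{r,s}(\tau)|\tau)$ and $\wp(p_{r',s'}(\tau)|\tau)$ are related by $\tau\mapsto\gamma\cdot\tau$ for some $\gamma\in\Gamma(2)$.

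Next I would invoke the identity (\ref{II-32}) from Lemma \ref{lem2}: for $\gamma=\begin{pmatrix}a&b\\c&d\end{pmatrix}\in SL(2,\mathbb{Z})$, setting $\tau'=\gamma\cdot\tau$ and $(s',r')=(s,r)\cdot\gamma^{-1}$, one has
\[
\wp\bigl(p_{r',s'}(\tau')\,|\,\tau'\bigr)=(c\tau+d)^{2}\,\wp\bigl(p_{r,s}(\tau)\,|\,\tau\bigr).
\]
Since $e_2(\tau)-e_1(\tau)$ is a weight-$2$ modular form for $\Gamma(2)$, when $\gamma\in\Gamma(2)$ the normalized function $\lambda=\dfrac{\wp(p(\tau)|\tau)-e_1(\tau)}{e_2(\tau)-e_1(\tau)}$ in (\ref{tr}) is genuinely invariant: $\lambda_{r',s'}(t(\tau'))=\lambda_{r,s}(t(\tau))$, and since $t(\tau')=t(\tau)$, this says $\lambda_{r',s'}$ is obtained from $\lambda_{r,s}$ by the continuation corresponding to $\gamma$. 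Combining with the sign ambiguity from (\ref{Z-invariance})--(\ref{equal}) (which gives $\lambda_{r,s}=\lambda_{-r,-s}$), this yields: $\lambda_{r,s}(t)$ and $\lambda_{r',s'}(t)$ belong to the same solution iff $(s',r')\equiv\pm(s,r)\cdot\gamma^{-1}\bmod\mathbb{Z}^2$ for some $\gamma\in\Gamma(2)$, which (replacing $\gamma$ by $\gamma^{-1}\in\Gamma(2)$ and rearranging) is the claimed condition $(s,r)\equiv\pm(s',r')\cdot\gamma\bmod\mathbb{Z}^2$.

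For the converse direction I would run the argument backwards: given $\gamma\in\Gamma(2)$ with $(s,r)\equiv\pm(s',r')\cdot\gamma$, the transformation formula (\ref{II-32}) together with $\Gamma(2)$-invariance of $t$ and the weight-$2$ modularity of $e_k$ shows that $\lambda_{r',s'}\circ t\circ\gamma = \lambda_{r,s}\circ t$ on $\mathbb{H}$, hence the two are branches of one multivalued solution on $\mathbb{C}\setminus\{0,1\}$, and the sign accounts for the identification (\ref{equal}). The main obstacle I anticipate is making precise the dictionary between ``closed loops in $\mathbb{C}\setminus\{0,1\}$ crossing the cut $(-\infty,1]$'' and ``the action of $\Gamma(2)$ on $\mathbb{H}$'': one must check that the fundamental group of $\mathbb{C}\setminus\{0,1\}$ acts through $\Gamma(2)$ on the relevant fiber, i.e.\ that $t(\tau)$ exhibits $\mathbb{H}$ as the $\Gamma(2)$-cover of $\mathbb{C}\setminus\{0,1\}$, and that the monodromy of $\wp(p(\tau)|\tau)$ (equivalently the permutation of the branches $\lambda_{r,s}$) is governed exactly by substitution of $\gamma$ into the pair $(r,s)$ as in (\ref{II-31-0}). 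Once that correspondence is set up cleanly, the rest is the bookkeeping with (\ref{II-31})--(\ref{II-32}) and the parity identifications (\ref{Z-invariance})--(\ref{equal}).
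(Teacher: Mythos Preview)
Your proposal is correct and follows exactly the approach the paper indicates: the paper does not spell out a proof but simply says that, using (\ref{625}), (\ref{II-32}), and (\ref{invariance})--(\ref{equal}), the argument is identical to \cite[Proposition~4.4]{CKLW} and omits the details. Your sketch supplies precisely those omitted details---the $\Gamma(2)$-covering $t:\mathbb{H}\to\mathbb{C}\setminus\{0,1\}$, the transformation law (\ref{II-32}) combined with the weight-$2$ $\Gamma(2)$-modularity of $e_k$, and the identification (\ref{equal}) for the $\pm$ sign---so there is nothing to add.
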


Using (\ref{625}), (\ref{II-32}), and (\ref{invariance})-(\ref{equal}), the
proof of Proposition \ref{Prop-II-2} is the same as \cite[Proposition
4.4]{CKLW}, where PVI$(\frac{1}{8},\frac{-1}{8},\frac{1}{8},\frac{3}{8})$ is
studied. The same result also holds for Picard solutions. See \cite[Theorem
1]{Mazzocco}. So we omit the details here.

\begin{lemma}
\label{4-torsion} (i) For any $N$-torsion points $(r,s),$ $\lambda_{r,s}( t) $
belongs to one of the three solutions $\lambda_{0,\frac{1}{N}}(t)$,
$\lambda_{\frac{1}{N},0}(t)$ and $\lambda_{\frac{1}{N},\frac{1}{N}}(t)$.

(ii) If $N$ is odd, then all the three in (i) belong to the same solution; and
if $N$ is even, then all the three in (i) represent 3 different solutions.

(iii)%
\begin{equation}
\lambda_{\frac{1}{N},0}(1-t)=1-\lambda_{0,\frac{1}{N}}(t),\text{
\  \  \  \ }\lambda_{\frac{1}{N},\frac{1}{N}}\left(  \frac{1}{t}\right)
=\frac{\lambda_{0,\frac{1}{N}}(t)}{t}. \label{4solution}%
\end{equation}

\end{lemma}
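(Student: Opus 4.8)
The plan is to prove Lemma \ref{4-torsion} in three stages, using the symmetries of $Z_{r,s}$ and the modular transformation formulas that have already been established in the excerpt.

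\medskip

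\textbf{Part (i).} First I would use the $SL(2,\mathbb{Z})$-action on pairs $(s,r)$ together with the symmetry $(r,s)\mapsto-(r,s)$. By \eqref{II-32-0} and \eqref{II-32}, the pair $\lambda_{r,s}(t)$ is transformed into $\lambda_{r',s'}(t')$ when $(s',r')=(s,r)\cdot\gamma^{-1}$, and by \eqref{invariant}/\eqref{equal} we may also negate $(r,s)$ modulo $\mathbb{Z}^2$. So, up to the combined action of $SL(2,\mathbb{Z})$ and $\pm 1$ on $(\mathbb{Z}/N)^2$, it suffices to show that every primitive vector of order $N$ can be brought to one of $(0,\frac1N)$, $(\frac1N,0)$, $(\frac1N,\frac1N)$. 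A primitive vector $(\frac{k_1}{N},\frac{k_2}{N})\in Q_N$ has $\gcd(k_1,k_2,N)=1$; by the standard theory of the $SL(2,\mathbb{Z})$-action on $(\mathbb{Z}/N)^2$, such a vector can be sent to $(\frac{d}{N},0)$ where $d=\gcd(k_1,k_2,N)=1$ — i.e. to $(\frac1N,0)$ — so in fact (i) is even stronger than stated: every $N$-torsion pair is $SL(2,\mathbb{Z})$-equivalent to $(\frac1N,0)$. But $\lambda_{r,s}$ belonging to the \emph{same PVI solution} as $\lambda_{r',s'}$ is governed by the smaller group $\Gamma(2)$ via Proposition \ref{Prop-II-2}, not all of $SL(2,\mathbb{Z})$; so one must track the $SL(2,\mathbb{Z})/\{\pm\Gamma(2)\}$ coset. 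Since $SL(2,\mathbb{Z})/\Gamma(2)\cong S_3$ is generated by (the classes of) $S=\left(\begin{smallmatrix}1&1\\0&1\end{smallmatrix}\right)$ and, say, $\left(\begin{smallmatrix}1&0\\1&1\end{smallmatrix}\right)$, one checks directly that the orbit of $(0,\frac1N)$ under these generators (modulo $\mathbb{Z}^2$ and $\pm$) is exactly $\{(0,\frac1N),(\frac1N,0),(\frac1N,\frac1N)\}$; combining with the strengthened statement above that every $Q_N$-vector is $SL(2,\mathbb{Z})$-equivalent to $(\frac1N,0)$, this proves (i).

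\medskip

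\textbf{Part (ii).} Here I would determine precisely when two of the three representatives $(0,\frac1N),(\frac1N,0),(\frac1N,\frac1N)$ are $\pm\Gamma(2)$-equivalent. By Proposition \ref{Prop-II-2}, $\lambda_{0,\frac1N}$ and $\lambda_{\frac1N,0}$ coincide as solutions iff $(\frac1N,0)\equiv\pm(0,\frac1N)\cdot\gamma\ \mathrm{mod}\ \mathbb{Z}^2$ for some $\gamma=\left(\begin{smallmatrix}a&b\\c&d\end{smallmatrix}\right)\in\Gamma(2)$; writing this out, $(0,\frac1N)\cdot\gamma=(\frac cN,\frac dN)$, so we need $\frac cN\equiv\pm\frac1N$ and $\frac dN\equiv 0\ \mathrm{mod}\ \mathbb{Z}$, i.e. $N\mid d$ and $c\equiv\pm1\ \mathrm{mod}\ N$. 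Combined with $\gamma\in\Gamma(2)$, i.e. $c\equiv 0$, $d\equiv1\ \mathrm{mod}\ 2$, and $\det\gamma=1$, the congruence $N\mid d$ together with $d$ odd forces... a case split on the parity of $N$: if $N$ is odd this is solvable (take $d\equiv0\ \mathrm{mod}\ N$, $d$ odd is fine, and $c\equiv\pm1\ \mathrm{mod}\ N$, $c\equiv0\ \mathrm{mod}\ 2$ — solvable by CRT since $\gcd(2,N)=1$, then pick $a,b$ with $ad-bc=1$), so all three collapse to one solution; if $N$ is even, then $N\mid d$ and $d$ odd are incompatible, so no such $\gamma$ exists, and similarly for the other two pairings one checks non-equivalence, giving three distinct solutions. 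I expect the bookkeeping of the CRT solvability (and checking $ad-bc=1$ can be arranged) to be the only slightly delicate point here, but it is routine.

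\medskip

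\textbf{Part (iii).} Finally, \eqref{4solution} should follow from the known action of the anharmonic group on $t$ and $\lambda$. The substitution $t\mapsto 1-t$ corresponds to the Möbius/modular transformation $\tau\mapsto$ (some element of $SL(2,\mathbb{Z})$ — e.g. $S=\left(\begin{smallmatrix}1&1\\0&1\end{smallmatrix}\right)$, since $t(S\cdot i\mathbb{R}^+)=(1,+\infty)$ as used in the proof of Theorem \ref{theorem1-7 copy(1)}) combined with the standard relation $\lambda\mapsto1-\lambda$ for PVI under $t\mapsto1-t$; tracing this through \eqref{tr} and \eqref{II-32} with $\gamma=S$ sends $(s,r)$ to $(s,r)\cdot S^{-1}=(s,r-s)$, which for $(r,s)=(\frac1N,0)$ gives $(0,\frac1N)$ after relabeling — yielding the first identity of \eqref{4solution}. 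The second identity, with $t\mapsto1/t$, is handled the same way using the appropriate generator of $SL(2,\mathbb{Z})$ (sending $\tau$ to an element with $t(\gamma\cdot i\mathbb{R}^+)$ the relevant arc) and the PVI symmetry $\lambda\mapsto\lambda/t$ under $t\mapsto1/t$, tracking $(r,s)$ accordingly. The main obstacle across the whole lemma is Part (ii): getting the congruence/CRT analysis exactly right so that the parity dichotomy of $N$ emerges cleanly; Parts (i) and (iii) are essentially a matter of correctly composing the already-established transformation formulas.
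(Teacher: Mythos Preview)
Your strategy is sound in outline and matches the paper closely for (ii) and (iii), but for (i) you take a genuinely different route, and your sketch of (iii) contains a concrete error.

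\textbf{Part (i).} The paper does \emph{not} argue via $SL(2,\mathbb{Z})$-transitivity on primitive vectors and then track the $\Gamma(2)$-coset. Instead it fixes $(k_1/N,k_2/N)\in Q_N$, writes $k_j=Lm_j$ with $L=\gcd(k_1,k_2)$, and does a case split on the parities of $(m_1,m_2)$ and of $(L,N)$, in each case exhibiting an explicit product $\gamma_2\gamma_1\in\Gamma(2)$ carrying the vector to one of the three base points. Your structural approach is legitimate, but the step ``one checks directly that the orbit \dots\ is exactly the three points'' is more delicate than you suggest: the outcome depends on which lifts of the six $\Gamma(2)$-cosets you choose. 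With the paper's own representatives $I,S,ST,S^2T,TS^{-1},STS^{-1}$, acting on $(s_0,r_0)=(\tfrac1N,0)$ one obtains e.g.\ $(\tfrac2N,-\tfrac1N)$ from $S^2T$, which is \emph{not} $\pm$-congruent mod $\mathbb{Z}^2$ to any of the three base vectors; a further $\Gamma(2)$-reduction is then required, which is exactly the content of the paper's case analysis. Your argument can be salvaged by choosing lifts judiciously (e.g.\ take $I,S,S',T,S'S,SS'$ with $S'=\left(\begin{smallmatrix}1&0\\1&1\end{smallmatrix}\right)$, for which the six images really do collapse to the three points mod $\pm,\mathbb{Z}^2$), but this verification is not automatic. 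The trade-off: the paper's approach is more elementary and self-contained; yours is more conceptual but shifts the work into choosing and checking coset lifts.

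\textbf{Part (ii).} This is the paper's Step~2 in different language. For odd $N$ the paper writes down explicit $\gamma\in\Gamma(2)$ (rather than invoking CRT) connecting the three points; for even $N$ both you and the paper use the parity obstruction $(0,\tfrac1N)\cdot\gamma=(\tfrac cN,\tfrac dN)$ with $d$ odd, hence $\tfrac dN\not\equiv0\bmod\mathbb{Z}$.

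\textbf{Part (iii).} Your framework is the paper's, but the identification of which $\gamma$ realizes which $t$-symmetry is wrong. Under $\tau\mapsto\tau+1$ (i.e.\ $S$) one has $e_2\leftrightarrow e_3$ and hence $t\mapsto 1/t$, \emph{not} $1-t$; indeed the fact $t(S\cdot i\mathbb{R}^+)=(1,+\infty)$ that you cite is consistent with $t\mapsto 1/t$ sending $(0,1)$ to $(1,+\infty)$. The paper uses $\gamma=\left(\begin{smallmatrix}1&-1\\0&1\end{smallmatrix}\right)$ (so $\tau'=\tau-1$, giving $t'=1/t$) for the second identity, tracking $(\tfrac1N,\tfrac1N)=(\tfrac1N,0)\cdot\gamma^{-1}$; and $\gamma=\left(\begin{smallmatrix}0&-1\\1&0\end{smallmatrix}\right)$ (so $\tau'=-1/\tau$, swapping $e_1\leftrightarrow e_2$, giving $t'=1-t$) for the first identity. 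Your computation with $S$ actually fixes the vector: $(0,\tfrac1N)\cdot S^{-1}=(0,\tfrac1N)$, so no relation is produced. This is a fixable bookkeeping error, not a conceptual one.
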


\begin{proof}
We divide the proof into three steps.\medskip

\textbf{Step 1.} Fix an $N$-torsion point $(r,s)=(\frac{k_{1}}{N},\frac{k_{2}%
}{N})$ with $0\leq k_{1},k_{2}\leq N-1$ and $\gcd(k_{1},k_{2},N)=1$. We show
that $\lambda_{r,s}(t)$ belongs to the same solution as one of $\{
\lambda_{0,\frac{1}{N}}(t),\lambda_{\frac{1}{N},0}(t),\lambda_{\frac{1}%
{N},\frac{1}{N}}(t)\}$. By Proposition \ref{Prop-II-2}, it suffices to prove
that for some $(r^{\prime},s^{\prime})\in \{(0,\frac{1}{N}),(\frac{1}%
{N},0),(\frac{1}{N},\frac{1}{N})\}$,
\begin{equation}
(s,r)\equiv(s^{\prime},r^{\prime})\cdot \gamma \text{ }\operatorname{mod}%
\mathbb{Z}^{2}\text{ \ by some }\gamma \in \Gamma(2) . \label{gamma}%
\end{equation}
Denote $L=\gcd(k_{1},k_{2})$ and $k_{j}=m_{j}L$. Then we have $\gcd(L,N)=1$
and $\gcd(m_{1},m_{2})=1$.

\textbf{Case 1.} both $m_{1}$ and $m_{2}$ are odd.

Then there exist $l_{1},l_{2}\in \mathbb{Z}$ such that $l_{1}m_{1}+l_{2}%
m_{2}=1$. Letting%
\begin{equation}
\gamma_{1}=%
\begin{pmatrix}
l_{1} & -l_{2}\\
m_{2}-l_{1} & m_{1}+l_{2}%
\end{pmatrix}
\in \Gamma(2)\text{ if }l_{1}\text{ odd,} \label{gamma-1}%
\end{equation}%
\begin{equation}
\gamma_{1}=%
\begin{pmatrix}
m_{2}+l_{1} & m_{1}-l_{2}\\
-l_{1} & l_{2}%
\end{pmatrix}
\in \Gamma(2)\text{ if }l_{1}\text{ even,} \label{gamma-2}%
\end{equation}
we have%
\begin{equation}
(s,r)=\left(  \frac{Lm_{2}}{N},\frac{Lm_{1}}{N}\right)  =\left(  \frac{L}%
{N},\frac{L}{N}\right)  \cdot \gamma_{1}. \label{ga}%
\end{equation}
If $L$ is odd and $N$ is odd, since $\gcd(L,2(L-N))=1$, there exists
$d_{1},d_{2}\in \mathbb{Z}$ such that $d_{1}L+2d_{2}(L-N)=1$. Let%
\begin{equation}
\gamma_{2}=%
\begin{pmatrix}
L & L-N\\
-2d_{2} & d_{1}%
\end{pmatrix}
\in \Gamma(2), \label{ga-2}%
\end{equation}
then $(\frac{L}{N},\frac{L}{N})\in(\frac{1}{N},0)\cdot \gamma_{2}%
+\mathbb{Z}^{2}$. Together with (\ref{ga}), we see that (\ref{gamma}) holds by
letting $(r^{\prime},s^{\prime})=(0,\frac{1}{N})$ and $\gamma=\gamma_{2}%
\gamma_{1}$. If $L$ is even and $N$ is odd, since $\gcd(2L,L-N)=1$, there
exist $\tilde{d}_{1},\tilde{d}_{2}\in \mathbb{Z}$ such that $2\tilde{d}%
_{1}L+\tilde{d}_{2}(L-N)=1$. Let%
\begin{equation}
\gamma_{2}=%
\begin{pmatrix}
\tilde{d}_{2} & -2\tilde{d}_{1}\\
L & L-N
\end{pmatrix}
\in \Gamma(2), \label{ga-3}%
\end{equation}
then $(\frac{L}{N},\frac{L}{N})\in(0,\frac{1}{N})\cdot \gamma_{2}%
+\mathbb{Z}^{2}$, which implies that (\ref{gamma}) holds by letting
$(r^{\prime},s^{\prime})=(\frac{1}{N},0)$ and $\gamma=\gamma_{2}\gamma_{1}$.
If $L$ is odd and $N$ is even, then similarly as (\ref{gamma-1}%
)-(\ref{gamma-2}), there exists%
\[
\gamma_{2}=%
\begin{pmatrix}
a & b\\
c & d
\end{pmatrix}
\in \Gamma(2)
\]
such that $a+c=L$ and $b+d=L-N$. Clearly $(\frac{L}{N},\frac{L}{N})\in
(\frac{1}{N},\frac{1}{N})\cdot \gamma_{2}+\mathbb{Z}^{2}$ and so (\ref{gamma})
holds by letting $(r^{\prime},s^{\prime})=(\frac{1}{N},\frac{1}{N})$ and
$\gamma=\gamma_{2}\gamma_{1}$.

\textbf{Case 2.} $m_{1}$ is even and $m_{2}$ is odd.

Similarly as (\ref{ga-2}) there exists%
\[
\gamma_{1}=%
\begin{pmatrix}
m_{2} & m_{1}\\
\ast & \ast
\end{pmatrix}
\in \Gamma(2).
\]
Then%
\begin{equation}
(s,r)=\left(  \frac{Lm_{2}}{N},\frac{Lm_{1}}{N}\right)  =\left(  \frac{L}%
{N},0\right)  \cdot \gamma_{1}. \label{ga-1}%
\end{equation}
If $L$ is odd and $N$ is even, there exists%
\[
\gamma_{2}=%
\begin{pmatrix}
L & N\\
\ast & \ast
\end{pmatrix}
\in \Gamma(2).
\]
Then $(\frac{L}{N},0)\in(\frac{1}{N},0)\cdot \gamma_{2}+\mathbb{Z}^{2}$, which
implies that (\ref{gamma}) holds by letting $(r^{\prime},s^{\prime}%
)=(0,\frac{1}{N})$ and $\gamma=\gamma_{2}\gamma_{1}$. If $L$ is even and $N$
is odd, similarly as (\ref{ga-3}) there exists%
\[
\gamma_{2}=%
\begin{pmatrix}
\ast & \ast \\
L & N
\end{pmatrix}
\in \Gamma(2).
\]
Then $(\frac{L}{N},0)\in(0,\frac{1}{N})\cdot \gamma_{2}+\mathbb{Z}^{2}$, which
implies that (\ref{gamma}) holds by letting $(r^{\prime},s^{\prime})=(\frac
{1}{N},0)$ and $\gamma=\gamma_{2}\gamma_{1}$. If $L$ is odd and $N$ is odd,
then similarly as (\ref{gamma-1})-(\ref{gamma-2}) there exists%
\[
\gamma_{2}=%
\begin{pmatrix}
a & b\\
c & d
\end{pmatrix}
\in \Gamma(2)
\]
such that $a+c=L$ and $b+d=N$. Then $(\frac{L}{N},0)\in(\frac{1}{N},\frac
{1}{N})\cdot \gamma_{2}+\mathbb{Z}^{2}$ and so (\ref{gamma}) holds by letting
$(r^{\prime},s^{\prime})=(\frac{1}{N},\frac{1}{N})$ and $\gamma=\gamma
_{2}\gamma_{1}$.

\textbf{Case 3.} $m_{2}$ is even and $m_{1}$ is odd.

The proof is similar to Case 2, so we omit the details. This completes the
proof of Step 1.\medskip

\textbf{Step 2. }Suppose $N=2m+1$ is odd. By choosing $\gamma=-%
\begin{pmatrix}
4m+1 & 2m\\
2 & 1
\end{pmatrix}
\in \Gamma(2)$, we have%
\[
\left(  \frac{1}{N},0\right)  \cdot \gamma \equiv \left(  \frac{1}{N},\frac{1}%
{N}\right)  \text{ mod }\mathbb{Z}^{2}.
\]
Similarly, by choosing $\tilde{\gamma}=%
\begin{pmatrix}
1 & 0\\
2m & 1
\end{pmatrix}
$, we see that
\[
\left(  \frac{1}{N},\frac{1}{N}\right)  \cdot \tilde{\gamma}\equiv \left(
0,\frac{1}{N}\right)  \text{ mod }\mathbb{Z}^{2}.
\]
Thus when $N$ is odd, all three in (i) belong to the same solution.

Now suppose $N=2m$ is even. For any $\gamma=%
\begin{pmatrix}
a & b\\
c & d
\end{pmatrix}
\in \Gamma(2)$, $c$ is even and $d$ is odd. We see that $\left(  0,\frac{1}%
{N}\right)  \cdot \gamma=\left(  \frac{c}{2m},\frac{d}{2m}\right)  $. Since $d$
is odd, $\frac{d}{2m}\not \equiv 0$ mod $\mathbb{Z}$ which implies $\left(
0,\frac{1}{N}\right)  \cdot \gamma \not \equiv \left(  \frac{1}{N},0\right)  $
mod $\mathbb{Z}^{2}$ for any $\gamma \in \Gamma(2)$. Similarly, any two of
$\left \{  \left(  0,\frac{1}{N}\right)  ,\left(  \frac{1}{N},0\right)
,\left(  \frac{1}{N},\frac{1}{N}\right)  \right \}  $ can not be connected by
$\Gamma(2)$ and mod $\mathbb{Z}^{2}$. Thus when $N$ is even, all the three in
(i) represent 3 different solutions.\medskip

\textbf{Step 3.} We prove (\ref{4solution}).

Let $\gamma=%
\begin{pmatrix}
1 & -1\\
0 & 1
\end{pmatrix}
$ and $\tau^{\prime}=\gamma \cdot \tau=\tau-1$. Since $(\frac{1}{N},\frac{1}%
{N})=(\frac{1}{N},0)\cdot \gamma^{-1}$, we see from (\ref{II-31-0}) and
(\ref{II-32}) that%
\[
\wp \left(  p_{\frac{1}{N},\frac{1}{N}}(\tau^{\prime})|\tau^{\prime}\right)
=\wp \left(  p_{0,\frac{1}{N}}(\tau)|\tau \right)  .
\]
This, together with $e_{1}(\tau^{\prime})=e_{1}(\tau)$, $e_{2}(\tau^{\prime
})=e_{3}(\tau)$, $e_{3}(\tau^{\prime})=e_{2}(\tau)$ and (\ref{tr}), implies
$t(\tau^{\prime})=1/t(\tau)$ and%
\begin{equation}
\lambda_{\frac{1}{N},\frac{1}{N}}(t(\tau^{\prime}))=\frac{\wp \left(
p_{0,\frac{1}{N}}(\tau)|\tau \right)  -e_{1}(\tau)}{e_{3}(\tau)-e_{1}(\tau
)}=\frac{\lambda_{0,\frac{1}{N}}(t(\tau))}{t(\tau)}. \label{r+s}%
\end{equation}
This proves the second formula in (\ref{4solution}). Similarly, by letting
$\gamma=%
\begin{pmatrix}
0 & -1\\
1 & 0
\end{pmatrix}
$, it is easy to prove the first formula in (\ref{4solution}).
\end{proof}

Define%
\begin{equation}
M_{N}(\tau):=\prod_{(r,s)\in Q_{N}}Z_{r,s}^{(2)}(\tau). \label{f-N}%
\end{equation}
By (\ref{II-32-0}), $M_{N}(\tau)$ is a \emph{modular form} with respect to
$SL(2,\mathbb{Z})$ of weight $3|Q_{N}|$, where $|Q_{N}|=\#Q_{N}$.

To obtain the number of zeros of $M_{N}(\tau)$, we recall the classical
formula for counting zeros of modular forms. See \cite{Serre} for the
proof.\medskip

\noindent \textbf{Theorem D.} \textit{Let }$f(\tau)$\textit{ be a nonzero
modular form with respect to }$SL(2,\mathbb{Z})$\textit{ of weight }%
$k$\textit{. Then}%
\begin{equation}
\sum_{\tau \in \mathbb{H}\backslash \{i,\rho \}}\nu_{\tau}(f)+\nu_{\infty
}(f)+\frac{1}{2}\nu_{i}(f)+\frac{1}{3}\nu_{\rho}(f)=\frac{k}{12},
\label{modular}%
\end{equation}
\textit{where }$\rho:=e^{\pi i/3}$\textit{, }$\nu_{\tau}(f)$ \textit{denotes
the zero order of} $f$ \textit{at} $\tau$ \textit{and the summation over
}$\tau$\textit{ is performed modulo }$SL(2,\mathbb{Z})$\textit{
equivalence.\medskip}

To prove Theorem \ref{thm number}, by Theorem D we have to calculate the
asymptotics of $Z_{r,s}^{(2)}(\tau)$ as $\tau \rightarrow\infty$. By using
the $q$-expansions of $\wp(z|\tau)$ (cf. \cite[p.46]{Lang}) and $Z_{r,s}(\tau)$
(cf. \cite[(5.3)]{CKLW}), the asymptotics of $Z_{r,s}^{(2)}(\tau)$ at $\tau=\infty$
can be calculated. Because the calculation is straightforward and is already
done in \cite{Dar,CLW2}, we state the statement and omit the calculation here.

\begin{lemma}
\label{infinity-behavior copy(1)} Let $(r,s)\in \lbrack0,1)\times \lbrack
0,1)\backslash \frac{1}{2}\mathbb{Z}^{2}$ and $q=e^{2\pi i\tau}$,
$0\leq\operatorname{Re}\tau \leq1$. Then as $\tau \rightarrow\infty$,%
\begin{equation}%
\begin{array}
[c]{l}%
Z_{r,s}^{(2)}(\tau)=4\pi^{3}is(1-s)(2s-1)+o(1)\text{ if }s\in \left(
0,\frac{1}{2}\right)  \cup \left(  \frac{1}{2},1\right)  ,\\
Z_{r,s}^{(2)}(\tau)=-48\pi^{3}\sin(2\pi r)q+O(q^{2})\text{ if }s=0,\\
Z_{r,s}^{(2)}(\tau)=-12\pi^{3}\sin(2\pi r)q^{1/2}+O(q)\text{ \ if \ }s=1/2.
\end{array}
\label{asp-2}%
\end{equation}

\end{lemma}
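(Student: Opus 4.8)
\textbf{Proof proposal for Lemma \ref{infinity-behavior copy(1)}.}

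The plan is to derive the stated asymptotics directly from the $q$-expansions of the Weierstrass functions and of the Hecke-type function $Z_{r,s}(\tau)$, and then substitute into the defining cubic expression \eqref{II-5}. Write $\alpha=r+s\tau$ with $(r,s)\in[0,1)\times[0,1)\setminus\frac12\mathbb{Z}^2$ and set $q=e^{2\pi i\tau}$, $\zeta_q=e^{2\pi i\alpha}=e^{2\pi ir}q^{s}$. The case $0<s<1$ (with $s\neq\frac12$) is the nondegenerate one: here $\zeta_q\to 0$ as $\tau\to\infty$, and the standard $q$-expansions give
\[
\wp(\alpha|\tau)=(2\pi i)^2\!\left(\tfrac1{12}+\tfrac{\zeta_q}{(1-\zeta_q)^2}+O(q)\right)=-\tfrac{\pi^2}{3}+O(q^{\min(s,1-s)}),
\]
\[
\wp'(\alpha|\tau)=(2\pi i)^3\!\left(\tfrac{\zeta_q(1+\zeta_q)}{(1-\zeta_q)^3}+O(q)\right)=O(q^{\min(s,1-s)}),
\]
and, from \cite[(5.3)]{CKLW},
\[
Z_{r,s}(\tau)=\pi i(1-2s)+O(q^{\min(s,1-s)}).
\]
Plugging these into $Z_{r,s}^{(2)}=Z^3-3\wp(\alpha)Z-\wp'(\alpha)$ and using $g_2(\tau)=\tfrac{4}{3}\pi^4+O(q)$ only implicitly (it enters through $12\wp^2-g_2$ in other formulas, not here), the leading constant term is
\[
(\pi i)^3(1-2s)^3-3\!\left(-\tfrac{\pi^2}{3}\right)\pi i(1-2s)= \pi^3 i\big[-(1-2s)^3+(1-2s)\big]=4\pi^3 i\,s(1-s)(2s-1),
\]
which is nonzero precisely because $s\notin\{0,\tfrac12,1\}$; all remaining terms are $o(1)$. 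This gives the first line of \eqref{asp-2}.

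For the degenerate cases $s=0$ and $s=\frac12$ the constant term above vanishes identically (indeed $4\pi^3 i\,s(1-s)(2s-1)=0$ at both values), so one must expand one order further and track the first nonvanishing power of $q$. When $s=0$ we have $\alpha=r$ fixed, $\zeta_q=e^{2\pi ir}$ bounded away from $0,1$ (since $r\notin\frac12\mathbb{Z}$), and the $q$-expansions of $\wp(r|\tau)$, $\wp'(r|\tau)$, $Z_{r,0}(\tau)$ each have a constant term plus an $O(q)$ correction whose coefficient involves $\sin(2\pi r)$ and $\cos(2\pi r)$. The constant parts must cancel in $Z^3-3\wp Z-\wp'$ — this is forced by the $s=0$ specialization of the formula just computed — and the surviving linear-in-$q$ coefficient collapses, after the trigonometric identities, to $-48\pi^3\sin(2\pi r)$. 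The case $s=\frac12$ is analogous but now $\zeta_q=e^{2\pi ir}q^{1/2}\to0$, so the natural expansion variable is $q^{1/2}$: the constant term again cancels and the coefficient of $q^{1/2}$ works out to $-12\pi^3\sin(2\pi r)$. In both degenerate cases one checks $\sin(2\pi r)\neq0$ automatically from $(r,s)\notin\frac12\mathbb{Z}^2$, so these are genuine leading terms.

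Since these computations are entirely mechanical — substitution of known Laurent/$q$-series and bookkeeping of cancellations — and since the identical expansions have already been carried out in \cite{Dar} and \cite{CLW2}, I would not reproduce them in full; I state the result and refer to those sources. The only point requiring a little care, and thus the main (minor) obstacle, is the degenerate cases $s=0,\frac12$: one must be sure that the cancellation of lower-order terms is exact before reading off the next coefficient, and that the branch of $q^{1/2}$ (equivalently, the choice $0\le\operatorname{Re}\tau\le1$) is fixed consistently so that the sign in front of $\sin(2\pi r)$ is unambiguous. This is precisely why the hypothesis $0\le\operatorname{Re}\tau\le1$ and the restriction of $(r,s)$ to $[0,1)\times[0,1)\setminus\frac12\mathbb{Z}^2$ appear in the statement.
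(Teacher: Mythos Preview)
Your approach matches the paper's exactly: the paper itself omits the computation entirely, stating only that the $q$-expansions of $\wp(z|\tau)$ and $Z_{r,s}(\tau)$ make the calculation straightforward and deferring to \cite{Dar,CLW2}. You go slightly further by actually sketching the generic case $s\notin\{0,\tfrac12\}$, which is fine; note however that your displayed identity $\pi^3 i\big[-(1-2s)^3+(1-2s)\big]=4\pi^3 i\,s(1-s)(2s-1)$ is off by a sign (the left side equals $4\pi^3 i\,s(1-s)(1-2s)$), so the leading term of $Z_{r,s}$ should read $\pi i(2s-1)$ rather than $\pi i(1-2s)$ for the final constant to come out as stated.
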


Let $\phi(N) $ be the Euler function defined in (\ref{Euler}) if $N$ is an
integer, and be zero if $N$ is not an integer. Now by applying Theorem D to
$M_{N}(\tau) $, we have the following theorem.

\begin{theorem}
\label{N-torsion}For $N\geq3$, the total number $P(N)$ of zeros (counting
multiplicity) of $M_{N}(\tau) $ in the fundamental domain $F$ of
$SL(2,\mathbb{Z})$ is given by
\begin{equation}
P(N) =\frac{|Q_{N}|}{4}-\left[  \phi(N)  +\phi \left(  \frac{N}%
{2}\right)  \right]  . \label{pn}%
\end{equation}

\end{theorem}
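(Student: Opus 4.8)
The plan is to apply Theorem D to the modular form $M_N(\tau)=\prod_{(r,s)\in Q_N}Z_{r,s}^{(2)}(\tau)$, which by (\ref{II-32-0}) has weight $k=3|Q_N|$. The right-hand side of (\ref{modular}) is therefore $\frac{k}{12}=\frac{|Q_N|}{4}$. To extract the number of zeros $P(N)$ in $\mathbb{H}$ (counted modulo $SL(2,\mathbb{Z})$, with the usual weights $\frac12$ at $i$ and $\frac13$ at $\rho$), I must subtract off $\nu_\infty(M_N)$, the vanishing order of $M_N$ at the cusp $\infty$, expressed in the variable $q=e^{2\pi i\tau}$. So the whole proof reduces to computing $\nu_\infty(M_N)=\sum_{(r,s)\in Q_N}\nu_\infty\bigl(Z_{r,s}^{(2)}\bigr)$, and then verifying
\[
P(N)=\frac{|Q_N|}{4}-\nu_\infty(M_N)=\frac{|Q_N|}{4}-\Bigl[\phi(N)+\phi\bigl(\tfrac N2\bigr)\Bigr].
\]

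\textbf{First I would} read off the cusp behaviour from Lemma \ref{infinity-behavior copy(1)}. The key point is that for $(r,s)\in Q_N$ with $s\neq 0,\tfrac12$ the leading term $4\pi^3 i s(1-s)(2s-1)+o(1)$ is a nonzero constant, so $\nu_\infty(Z_{r,s}^{(2)})=0$ for all such pairs — these contribute nothing. (One must be a little careful that $F_0$ and the normalization of $q$ used in the lemma are compatible with the cusp width for $SL(2,\mathbb{Z})$; since $M_N$ is a genuine $SL(2,\mathbb{Z})$-modular form, its natural local parameter at $\infty$ is $q=e^{2\pi i\tau}$, and the lemma is stated in exactly this $q$.) The contributions come only from the pairs with $s=0$ and with $s=\tfrac12$. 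For $s=0$, we need $(r,0)\in Q_N$, i.e. $r=k/N$ with $\gcd(k,N)=1$; there are $\phi(N)$ such pairs, and for each the lemma gives $Z_{r,0}^{(2)}(\tau)=-48\pi^3\sin(2\pi r)\,q+O(q^2)$, so $\nu_\infty=1$ (note $\sin(2\pi k/N)\neq 0$ since $k/N\notin\tfrac12\mathbb{Z}$). For $s=\tfrac12$, we need $(r,\tfrac12)\in Q_N$: writing $r=k_1/N$, $s=k_2/N$ with $k_2/N=\tfrac12$ forces $N=2M$ even and $k_2=M$, and then $\gcd(k_1,M,2M)=1$ means $\gcd(k_1,M)=1$ with $0\le k_1<N$; the number of such $k_1$ is $2\phi(M)=2\phi(N/2)$ when $M\ge 2$. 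For each, $Z_{r,1/2}^{(2)}(\tau)=-12\pi^3\sin(2\pi r)\,q^{1/2}+O(q)$, so as a power series in $q^{1/2}$ it has order $1$ — but $M_N$ is a power series in $q$, so each such factor contributes $\tfrac12$ to $\nu_\infty(M_N)$; there being $2\phi(N/2)$ of them, the total is $\phi(N/2)$.

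\textbf{Then} summing, $\nu_\infty(M_N)=\phi(N)+\phi(N/2)$ (with $\phi(N/2)=0$ when $N$ is odd, consistent with the convention fixed just before the theorem), and substituting into (\ref{modular}) gives exactly (\ref{pn}). \emph{The main obstacle} I anticipate is not the arithmetic bookkeeping of $Q_N$ but the half-integer subtlety at $s=\tfrac12$: one must be sure that the individual factors $Z_{r,1/2}^{(2)}$ with leading term in $q^{1/2}$ combine into an honest holomorphic $q$-series for $M_N$ (they do, because $Q_N$ is symmetric under $(r,s)\mapsto(-r,-s)\equiv(1-r,s)\bmod\mathbb{Z}^2$ restricted to $s=\tfrac12$, pairing up the odd powers, or simply because (\ref{invariance}) and modularity of $M_N$ force integrality), and that $\nu_\infty$ in Theorem D is measured in $q$, so each $q^{1/2}$-factor counts as $\tfrac12$. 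Once this is handled correctly the computation is routine, and (\ref{pn}) follows. This $P(N)$ is precisely the count that will feed into the pole-counting formulas of Theorem \ref{thm number} via Theorem \ref{simple-zero}.
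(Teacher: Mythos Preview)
Your overall strategy matches the paper's: apply Theorem D to the weight-$3|Q_N|$ modular form $M_N$ and compute $\nu_\infty(M_N)$ from Lemma \ref{infinity-behavior copy(1)}. Your cusp bookkeeping (the $\phi(N)$ factors from $s=0$ each contributing $1$, and the $2\phi(N/2)$ factors from $s=\tfrac12$ each contributing $\tfrac12$ in the $q$-variable) is correct and agrees with the paper.

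There is, however, a genuine gap at the elliptic points. You interpret $P(N)$ as the zero count ``with the usual weights $\tfrac12$ at $i$ and $\tfrac13$ at $\rho$'', but the theorem asserts $P(N)$ is the \emph{unweighted} total multiplicity in $F$, and this integer meaning is what is used downstream in Theorem \ref{thm number} (where one needs $\#\{\tau\in F_2: M_N(\tau)=0\}=6P(N)$). Theorem D by itself only gives
\[
\sum_{\tau\in\mathbb{H}\setminus\{i,\rho\}}\nu_\tau(M_N)+\tfrac12\nu_i(M_N)+\tfrac13\nu_\rho(M_N)=\frac{|Q_N|}{4}-\nu_\infty(M_N),
\]
so to obtain (\ref{pn}) as stated you must also verify $\nu_i(M_N)=\nu_\rho(M_N)=0$. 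The paper supplies both: $Z_{r,s}^{(2)}(i)\ne 0$ for all real $(r,s)\notin\tfrac12\mathbb{Z}^2$ follows from Theorem C since $i\in\partial F_0$, and $Z_{r,s}^{(2)}(\rho)\ne 0$ is quoted from \cite[Corollary 3.2]{Chen-Kuo-Lin3}. Neither is automatic from anything in your outline, and without them your expression for $P(N)$ is only the weighted count, which a priori need not be an integer and would not feed correctly into the pole-counting argument.
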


\begin{proof}
First, it is easy to see that $\frac{|Q_{N}|}{4}\in \mathbb{Z}_{>0}$ for any
$N\geq3$ because
\[
|Q_{N}|=N^{2}\prod_{p|N,\ p\text{ prime}}\left(  \frac{p^{2}-1}{p^{2}}\right)
.
\]
Since $i\in \partial F_{0}$, by Theorem C we see that $Z_{r,s}^{(
2)}(i)\neq0$ for any $\left(  r,s\right)
\in \mathbb{R}^{2}\backslash \frac{1}{2}\mathbb{Z}^{2}$. Thus
\begin{equation}
\nu_{i}(M_{N}(\tau))=0. \label{i}%
\end{equation}
By \cite[Corollary 3.2]{Chen-Kuo-Lin3} where $Z_{r,s}^{(2)}(\rho)\not =0$ for
any $(r,s)\in \mathbb{R}^{2}\backslash \frac{1}{2}\mathbb{Z}^{2}$ is proved, we
have
\begin{equation}
\nu_{\rho}(M_{N}(\tau))=0. \label{ro}%
\end{equation}
Then we deduce from (\ref{i}), (\ref{ro}) and (\ref{modular}) that%
\begin{equation}
P(N)=\sum_{\tau \in \mathbb{H}\backslash \{i,\rho \}}\nu_{\tau}(M_{N}(\tau
))=\frac{|Q_{N}|}{4}-v_{\infty}\left(  M_{N}(\tau)\right)  . \label{a}%
\end{equation}
From Lemma \ref{infinity-behavior copy(1)}, we see that%
\begin{align}
v_{\infty}\left(  M_{N}(\tau)\right)   &  =\# \left \{  1\leq k_{1}%
<N|\gcd \left(  N,k_{1}\right)  =1\right \} \label{b}\\
&  +\frac{1}{2}\# \left \{  1\leq k_{1}<N|\gcd \left(  N/2,k_{1}\right)
=1\right \} \nonumber \\
&  =\phi(  N)  +\phi \left(  \frac{N}{2}\right)  .\nonumber
\end{align}
Here $\phi \left(  \frac{N}{2}\right)  =0$ whenever $N$ is odd. Combining
(\ref{a}) and (\ref{b}), we have
\[
P(N)=\frac{|Q_{N}|}{4}-\left[  \phi(N)  +\phi \left(\frac{N}%
{2}\right)  \right]  .
\]
where the summation over $\tau$ is performed modulo $SL(2,\mathbb{Z})$
equivalence. This completes the proof.\medskip
\end{proof}

For $N=3,4$, it is easy to see $P( N) =0$.

\begin{corollary}
\label{coro} For $N=3,4$ and $\left(  r,s\right)  \in Q_{N}$, $Z_{r,s}%
^{\left(  2\right)  }\left(  \tau \right)  \not =0$ for any $\tau \in \mathbb{H}$.
\end{corollary}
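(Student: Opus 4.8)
The plan is to combine the formula for $P(N)$ in Theorem \ref{N-torsion} with the fact that $M_N(\tau)$ is a modular form of weight $3|Q_N|$ with respect to $SL(2,\mathbb{Z})$, for the specific values $N=3$ and $N=4$. First I would compute $|Q_N|$ for these two cases. For $N=3$ one gets $|Q_3| = 3^2(1-\tfrac{1}{9}) = 8$, so $\tfrac{|Q_3|}{4}=2$, while $\phi(3)=2$ and $\phi(\tfrac{3}{2})=0$ (since $\tfrac32$ is not an integer, by the convention fixed before Theorem \ref{N-torsion}); hence $P(3) = 2 - (2+0) = 0$. For $N=4$ one gets $|Q_4| = 4^2(1-\tfrac14) = 12$, so $\tfrac{|Q_4|}{4}=3$, while $\phi(4)=2$ and $\phi(2)=1$; hence $P(4) = 3 - (2+1) = 0$. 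Thus in both cases the total number of zeros of $M_N(\tau)$ in a fundamental domain $F$ of $SL(2,\mathbb{Z})$, counted with multiplicity, is zero.

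Next I would invoke (\ref{i}) and (\ref{ro}) from the proof of Theorem \ref{N-torsion}, which say that $M_N(\tau)$ does not vanish at the two elliptic points $i$ and $\rho$ of $F$ (these rest on Theorem C, giving $Z_{r,s}^{(2)}(i)\neq 0$, and on \cite[Corollary 3.2]{Chen-Kuo-Lin3}, giving $Z_{r,s}^{(2)}(\rho)\neq 0$, for every $(r,s)\in\mathbb{R}^2\setminus\tfrac12\mathbb{Z}^2$). Combining these with $P(3)=P(4)=0$, the zero-counting identity (\ref{modular}) shows that $M_N$ has no zero anywhere in $\mathbb{H}$ (and $\nu_\infty(M_N)=0$ as well, consistent with Lemma \ref{infinity-behavior copy(1)} once one checks that for $N=3,4$ the relevant $(r,s)\in Q_N$ have $s\notin\{0,\tfrac12\}$ after reduction, or more simply since every term in (\ref{modular}) is nonnegative and they already sum to the right value). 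Since $M_N(\tau)=\prod_{(r,s)\in Q_N} Z_{r,s}^{(2)}(\tau)$ is a finite product, the vanishing of any single factor $Z_{r,s}^{(2)}(\tau)$ at some $\tau\in\mathbb{H}$ would force $M_N(\tau)=0$, contradicting what we have just shown. Therefore $Z_{r,s}^{(2)}(\tau)\neq 0$ for all $(r,s)\in Q_N$ and all $\tau\in\mathbb{H}$ when $N\in\{3,4\}$.

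This argument is essentially bookkeeping: there is no real obstacle, since all the analytic input — the modularity and weight of $M_N$, the non-vanishing at $i$ and $\rho$, and the $q$-expansion at infinity — has already been established. The only point requiring a moment of care is the correct evaluation of the Euler-function convention for non-integer arguments (so that $\phi(\tfrac{N}{2})=0$ for odd $N$), but this is exactly the convention fixed just before Theorem \ref{N-torsion}, so it causes no difficulty here.
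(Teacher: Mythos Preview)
Your argument is correct and is exactly the paper's approach: the paper simply notes ``For $N=3,4$, it is easy to see $P(N)=0$'' immediately before stating the corollary, and you have spelled out that computation and the (implicit) passage from $P(N)=0$ to nonvanishing of each factor $Z_{r,s}^{(2)}$ on $\mathbb{H}$.

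One small correction to a parenthetical remark: your claim that $\nu_\infty(M_N)=0$ is false. From (\ref{b}) one has $\nu_\infty(M_N)=\phi(N)+\phi(N/2)$, which equals $2$ for $N=3$ and $3$ for $N=4$; indeed $Q_3$ and $Q_4$ contain points with $s=0$ (e.g.\ $(\tfrac13,0)$) and $Q_4$ contains points with $s=\tfrac12$ (e.g.\ $(\tfrac14,\tfrac12)$), so by Lemma \ref{infinity-behavior copy(1)} several factors of $M_N$ do vanish at the cusp. This does not affect your main argument, since $P(N)$ already denotes the number of zeros in $F\subset\mathbb{H}$ and is computed in Theorem \ref{N-torsion} precisely by subtracting $\nu_\infty(M_N)$ from $k/12=\tfrac{|Q_N|}{4}$.
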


Now, we are going to prove Theorem \ref{thm number}. Before the proof, we have
following three discussions: Let $F$ be a fundamental domain of $SL(
2,\mathbb{Z})  $ defined by
\[
F:=\left \{  \tau \in \mathbb{H}\,|\,0\leq \text{Re}\tau<1\text{, }\left \vert
\tau \right \vert \geq1,\left \vert \tau-1\right \vert >1\right \}  \cup \left \{
\rho=e^{\pi i/3}\right \}  \text{.}%
\]

(i) It is well-known (cf. \cite{AK}) that $\Gamma(2)$ is a normal subgroup of
$SL(2,\mathbb{Z})$ and $SL(2,\mathbb{Z})/\Gamma(2) $ $=$ $\{I,S,ST,S^{2}%
T,TS^{-1},STS^{-1}\}$ where $S=%
\begin{pmatrix}
1 & 1\\
0 & 1
\end{pmatrix}
$ and $T=%
\begin{pmatrix}
0 & -1\\
1 & 0
\end{pmatrix}
$. So a fundamental domain $F_{2}$ of $\Gamma(2) $ can be obtained by $F$:%
\begin{equation}
F_{2}=F\cup SF\cup STF\cup S^{2}TF\cup TS^{-1}F\cup STS^{-1}F. \label{f2}%
\end{equation}
By a straightforward computation, we have
\[
F_{2}=\{ \tau \in \mathbb{H}\mid0\leq \operatorname{Re}\tau<2,\,|\tau-\tfrac
{1}{2}|\geq \tfrac{1}{2}\text{, }\left \vert \tau-\tfrac{3}{2}\right \vert
>\tfrac{1}{2}\}.
\]
Note that for any $\tau$ $\in F_{2}$, $t(\tau)\in \mathbb{R}\backslash \left \{
0,1\right \}  $ if and only if $\tau$ $\in i\mathbb{R}^{+}$ $\cup \{ \tau
\in \mathbb{H}||\tau-\tfrac{1}{2}|=\tfrac{1}{2}\} \cup \{ \tau \in \mathbb{H}%
|\operatorname{Re}\tau=1\}$. More precisely, $t\left(  i\mathbb{R}^{+}\right)
$ $=\left(  0,1\right)  $, $t\left(  \{ \tau \in \mathbb{H}||\tau-\tfrac{1}%
{2}|=\tfrac{1}{2}\} \right)  $ $=\left(  -\infty,0\right)  $ and $t\left(
1+i\mathbb{R}^{+}\right)  $ $=\left(  1,+\infty \right)  $. See \cite{AK,Lang2}%
. Hence the image of $F_{2}\backslash(i\mathbb{R}^{+}\cup \{ \tau \in
\mathbb{H}||\tau-\tfrac{1}{2}|=\tfrac{1}{2}\})$ is $\mathbb{C}\backslash
(-\infty,1]$. By (\ref{f2}) and $M_{N}(\tau)\not =0$ at $\tau \in \{i, \rho \}$,
we have
\[
\# \{ \tau \in F_{2}\mid M_{N}(\tau)=0\}=6 P( N) ,
\]
where the RHS is counted by multiplicity.\medskip

(ii) If $\left(  r,s\right)  \in Q_{N}$, then $Q_{N}\ni$ $\left(  r^{\prime
},s^{\prime}\right)  \doteqdot( 1-r,1-s) $ mod $\mathbb{Z}^{2}$ and
$Z_{r^{\prime},s^{\prime}}^{( 2) }( \tau) $ $=-Z_{r,s}^{( 2) }( \tau) $. Hence
if $Z_{r,s}^{( 2) }( \tau_{0})=0$ then $Z_{r^{\prime},s^{\prime}}^{(2) }(
\tau_{0})=0$. In other words, the order of each zero of $M_{N}(\tau)$ is
\emph{even}. Remark that $\lambda_{r,s}( t) \equiv \lambda_{r^{\prime
},s^{\prime}}(t)$ by (\ref{equal}). \medskip

(iii) By Corollary \ref{cor1}, $Z_{r,s}^{(2) }( \tau) $ has only simple zeros.
For two $N$-torsion points $\left(  r,s\right)  $ $\not \equiv \pm \left(
r^{\prime},s^{\prime}\right)  \operatorname{mod} \mathbb{Z}^{2} $, $Z_{r,s}^{(
2) }( \tau) $ and $Z_{r^{\prime},s^{\prime}}^{( 2) }( \tau) $ might
simultaneously vanish at the same $\tau_{0}\in F_{2}$. But because
(\ref{equal}) gives $\lambda_{r,s}( t) \not \equiv \lambda_{r^{\prime
},s^{\prime}}( t)$, either they belongs to two different algebraic solutions
(this might happen if $N$ is even) or they are two different branches of the
same algebraic solution $\lambda(t)$ (this must happen if $N$ is odd). In the
later case, we count the number of poles of $\lambda(t) $ at $t_{0}=t(\tau
_{0}) $ as $2$ (as multiplicity).

\begin{proof}
[Proof of Theorem \ref{thm number}]If $N$ is odd, then for all $(r,s)\in
Q_{N}$, $\lambda_{r,s}( t) $ belong to the same one algebraic solution
$\lambda(t)$. By (i), (ii) and the simple zero property (iii), we obtain%
\[
\text{the total number of poles of }\lambda(t) =6 P( N) \times \frac{1}%
{2}=3P(N).
\]

If $N$ is even, then we have three different solutions, namely $\lambda
_{\frac{1}{N},0}(t) $, $\lambda_{0,\frac{1}{N}}(t) $, $\lambda_{\frac{1}%
{N},\frac{1}{N}}( t)$. By (\ref{4solution}), each of them have the same number
of branches and the same number of poles. Let $\lambda( t) $ be any one of
them. Then by (i), (ii) and (iii), we have%
\[
\text{the total number of poles of }\lambda(t) =6P( N) \times \frac{1}{2}%
\times \frac{1}{3}=P( N) .
\]
This completes the proof.
\end{proof}

\section{The proof of Theorem \ref{main-thm copy(2)}}

In this section, we want to prove Theorem \ref{main-thm copy(2)}, namely the
following result.

\begin{theorem}
\label{main-thm}Among all such solutions $\lambda_{r,s}(t)$ of PVI$(\frac
{9}{8},\frac{-1}{8},\frac{1}{8},\frac{3}{8})$, where $(r,s)\in \mathbb{C}%
^{2}\backslash \frac{1}{2}\mathbb{Z}^{2}$, there are exactly four solutions
which have no poles in $\mathbb{C}\backslash \{0,1\}$. They are precisely
$\lambda_{0,\frac{1}{3}}(t)$, $\lambda_{0,\frac{1}{4}}(t)$, $\lambda_{\frac
{1}{4},0}(t)$ and $\lambda_{\frac{1}{4},\frac{1}{4}}(t)$.
\end{theorem}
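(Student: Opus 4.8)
The plan is to reduce the statement to a counting problem about zeros of premodular forms together with the special torsion points $N=3,4$, which have already been shown in Corollary~\ref{coro} to give pole-free solutions. First I would observe that by Theorem~\ref{simple-zero}, a solution $\lambda_{r,s}(t)$ has no poles in $\mathbb{C}\backslash\{0,1\}$ if and only if two conditions hold: (a) $Z_{r,s}^{(2)}(\tau)\neq 0$ for all $\tau\in\mathbb{H}$, and (b) $r+s\tau\notin\Lambda_\tau$ for all $\tau\in\mathbb{H}$. Condition (b) forces $(r,s)\in\mathbb{Q}^2$ (otherwise the line $r+s\tau$ meets the lattice for some $\tau$), and then, since $(r,s)\notin\frac12\mathbb{Z}^2$, writing $(r,s)$ in lowest common denominator shows $(r,s)$ must be an $N$-torsion point $(r,s)\in Q_N$ for some $N\geq 3$. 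Thus the pole-free solutions are exactly those $\lambda_{r,s}(t)$ with $(r,s)\in Q_N$ and $Z_{r,s}^{(2)}(\tau)$ never vanishing on $\mathbb{H}$.

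Next I would use the modular form $M_N(\tau)=\prod_{(r,s)\in Q_N}Z_{r,s}^{(2)}(\tau)$ and Theorem~\ref{N-torsion}: the total number $P(N)=\frac{|Q_N|}{4}-[\phi(N)+\phi(N/2)]$ of zeros of $M_N$ in $F$ counts (with multiplicity) exactly how many of the $Z_{r,s}^{(2)}$ vanish somewhere. If $P(N)>0$ then at least one, hence — by the $\pm$ symmetry $(r,s)\mapsto(1-r,1-s)$ recorded in discussion (ii) of \S3 and the fact that a whole $\Gamma(2)$-orbit modulo $\pm$ belongs to the same solution — an entire solution among the $N$-torsion ones acquires a pole. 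So I would show that for every $N\geq 5$ we have $P(N)\geq 1$, forcing every algebraic solution with monodromy $D_N$, $N\geq 5$, to have a pole; combined with Lemma~\ref{4-torsion}(ii) (three solutions if $N$ even, one if $N$ odd) this would eliminate all $N\geq 5$. The inequality $P(N)\geq 1$ for $N\geq 5$ is an elementary estimate: $|Q_N| = N^2\prod_{p\mid N}(1-p^{-2})$ grows quadratically while $\phi(N)+\phi(N/2)$ grows essentially linearly, so one checks $\frac{|Q_N|}{4}>\phi(N)+\phi(N/2)$ for all $N\geq 5$ by a direct case analysis on small $N$ plus a crude bound $\phi(N)\leq N$, $\phi(N/2)\leq N/2$ versus $|Q_N|\geq \frac{3}{8}N^2$ (using $\prod_{p\mid N}(1-p^{-2})\geq 1-\frac14-\frac19-\cdots$, or more simply $|Q_N|/4\geq$ the relevant linear bound once $N\geq 5$).

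Then for $N=3$ and $N=4$, Corollary~\ref{coro} gives $Z_{r,s}^{(2)}(\tau)\neq 0$ for all $\tau\in\mathbb{H}$ and all $(r,s)\in Q_N$, so by the equivalence established in the first paragraph every such $\lambda_{r,s}(t)$ is pole-free in $\mathbb{C}\backslash\{0,1\}$. By Lemma~\ref{4-torsion}(ii), $N=3$ (odd) contributes a single solution, which one can take to be $\lambda_{\frac{1}{3},0}(t)$ (equivalently $\lambda_{0,\frac13}(t)$, $\lambda_{\frac13,\frac13}(t)$, all the same solution), while $N=4$ (even) contributes three distinct solutions, represented by $\lambda_{\frac14,0}(t)$, $\lambda_{0,\frac14}(t)$, $\lambda_{\frac14,\frac14}(t)$. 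Finally I would check these four really are distinct solutions — immediate since the $N=3$ one has monodromy $D_3$ and the others $D_4$, and the three $D_4$ ones are distinct by Lemma~\ref{4-torsion}(ii) — giving exactly four pole-free solutions, as claimed.

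The main obstacle I anticipate is making the elimination of $N\geq 5$ fully rigorous and clean: one must be careful that $P(N)\geq 1$ genuinely implies a \emph{whole} solution (not just one branch among possibly several) picks up a pole. For $N$ odd this is automatic since all $N$-torsion branches lie in one solution; for $N$ even one must argue that each zero of $M_N$, via the $(r,s)\leftrightarrow(1-r,1-s)$ pairing and the $\Gamma(2)$-action of Proposition~\ref{Prop-II-2}, is distributed so that whichever of the three solutions it belongs to indeed has a pole — and since we only need \emph{some} solution to be non-smooth to rule $N$ out as a source of new pole-free solutions, a single zero suffices regardless of which of the three it hits. So the estimate $P(N)\geq 1$ for $N\geq 5$ is really the only quantitative content, and it is routine.
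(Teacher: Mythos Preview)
Your reduction in the first paragraph contains a genuine error: condition (b), that $r+s\tau\notin\Lambda_\tau$ for all $\tau\in\mathbb{H}$, does \emph{not} force $(r,s)\in\mathbb{Q}^2$. In fact it forces only $(r,s)\in\mathbb{R}^2$. To see this, note that $r+s\tau=m+n\tau$ with $m,n\in\mathbb{Z}$ and $\tau\in\mathbb{H}$ gives $(r-m)+(s-n)\tau=0$; if $r,s$ are real then taking imaginary parts yields $s=n$ and hence $r=m$, so $(r,s)\in\mathbb{Z}^2$. Thus for every real pair $(r,s)\notin\mathbb{Z}^2$ condition (b) holds automatically, and your argument says nothing about $(r,s)\in\mathbb{R}^2\setminus\mathbb{Q}^2$. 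This is precisely the hard case the paper has to work for: one must show that $Z_{r,s}^{(2)}(\tau)$ has a zero in $\mathbb{H}$ for every irrational real $(r,s)$. The paper does this by invoking Theorem~C (which produces a zero in $F_0$ whenever $(r,s)\in\triangle_1\cup\triangle_2\cup\triangle_3$) and then, for $(r,s)\in\triangle_0$ or on the boundaries of the $\triangle_k$, constructing explicit $\Gamma(2)$-elements that carry $(r,s)$ into $\triangle_1\cup\triangle_2\cup\triangle_3$ so that Proposition~\ref{Prop-II-2} transfers the pole back. Your proposal skips this entirely.

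There is also a smaller gap in your treatment of even $N\geq 5$. You assert that $P(N)\geq 1$ ``forces every algebraic solution with monodromy $D_N$'' to have a pole, but then in your final paragraph you retreat to ``we only need \emph{some} solution to be non-smooth to rule $N$ out,'' which is false: for even $N$ there are three distinct solutions, and knowing one of them has a pole says nothing about the other two. What rescues this is Lemma~\ref{4-torsion}(iii) (the transformations in \eqref{4solution}), which shows the three solutions are related by $t\mapsto 1-t$ and $t\mapsto 1/t$ and hence have the \emph{same} number of poles; this is exactly how Theorem~\ref{thm number} concludes that each of the three has $P(N)$ poles. You should invoke that symmetry (or simply Theorem~\ref{thm number} itself) rather than the bare estimate $P(N)\geq 1$.
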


First we consider the case $(r,s)\not \in \mathbb{R}^{2}$. Since
$(r,s)\in \mathbb{C}^{2}\backslash \mathbb{R}^{2}$, there are infinitely many
$\tau_{0}\in \mathbb{H}$ such that $r+s\tau_{0}\in \Lambda_{\tau_{0}}$. The
following result is a direct consequence of Theorem \ref{simple-zero}.

\begin{lemma}
\label{complex-pair}Let $(r,s)\in \mathbb{C}^{2}\backslash \mathbb{R}^{2}$. Then
$\lambda_{r,s}(t)$ has infinitely many poles.
\end{lemma}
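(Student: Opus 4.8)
The plan is to prove Lemma~\ref{complex-pair} as a quick corollary of Theorem~\ref{simple-zero}, using the fact that when $(r,s)\notin\mathbb{R}^2$ the condition $r+s\tau\in\Lambda_\tau$ is satisfied for infinitely many $\tau\in\mathbb{H}$. First I would observe that $r+s\tau_0\in\Lambda_{\tau_0}$ means there exist integers $m,n$ with $r+s\tau_0=m+n\tau_0$, i.e. $(r-m)+(s-n)\tau_0=0$, so $\tau_0=-(r-m)/(s-n)$ whenever $s\neq n$. Since $(r,s)\notin\mathbb{R}^2$, at least one of $r,s$ is non-real; one checks that for all but finitely many pairs $(m,n)\in\mathbb{Z}^2$ the quantity $\tau_0=\tau_0(m,n)=-(r-m)/(s-n)$ genuinely lies in $\mathbb{H}$ (its imaginary part is $\operatorname{Im}\big(-(r-m)\overline{(s-n)}\big)/|s-n|^2$, which for fixed sign of the numerator has constant sign once $|m|$ or $|n|$ is large, and these $\tau_0(m,n)$ are pairwise distinct), so there are indeed infinitely many such $\tau_0$.

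Next, for each such $\tau_0$, Theorem~\ref{simple-zero} applies directly: since $r+s\tau_0\in\Lambda_{\tau_0}$, we conclude $p_{r,s}(\tau_0)=0$ in $E_{\tau_0}$, equivalently $t_0=t(\tau_0)\notin\{0,1,\infty\}$ is a pole of $\lambda_{r,s}(t)$. (Here we use that $(r,s)\in\mathbb{C}^2\setminus\frac12\mathbb{Z}^2$, which is part of the standing hypothesis, so Theorem~\ref{simple-zero} is applicable, and also the estimate \eqref{II-174} showing $\wp(p_{r,s}(\tau)|\tau)\to\infty$, guaranteeing these are genuine poles rather than removable points.) Thus we obtain infinitely many points $\tau_0$ at which $\wp(p_{r,s}(\tau)|\tau)$ has a pole in $\mathbb{H}$.

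Finally I would translate this back to the $t$-plane. The map $t=t(\tau)$ of \eqref{tr} is a nonconstant holomorphic (indeed locally biholomorphic) covering $\mathbb{H}\to\mathbb{C}\setminus\{0,1\}$, so the image set $\{t(\tau_0)\}$ is infinite unless the $\tau_0$'s accumulate; but any accumulation point would lie on $\partial\mathbb{H}$ (the $\tau_0(m,n)$ escape every compact subset of $\mathbb{H}$ as $|m|+|n|\to\infty$, since $\operatorname{Im}\tau_0(m,n)\to 0$), so no two of them collapse to the same $t$-value beyond finite repetition, and in particular the pole set of $\lambda_{r,s}$ in $\mathbb{C}\setminus\{0,1\}$ is infinite. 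I do not expect any real obstacle here; the only point requiring mild care is the elementary verification that infinitely many of the candidate $\tau_0(m,n)$ actually land in the upper half-plane and are distinct, which is a short computation with $\operatorname{Im}$ of a Möbius-type expression in $(m,n)$. Everything substantive has already been done in Theorem~\ref{simple-zero}, so this lemma is genuinely a corollary.
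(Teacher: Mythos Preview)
Your approach is correct and matches the paper's: the paper simply observes, in the sentence immediately preceding the lemma, that for $(r,s)\in\mathbb{C}^2\setminus\mathbb{R}^2$ there are infinitely many $\tau_0\in\mathbb{H}$ with $r+s\tau_0\in\Lambda_{\tau_0}$, and then declares the lemma a direct consequence of Theorem~\ref{simple-zero}. Your final paragraph translating back to the $t$-plane is more than the paper provides and is not really needed (poles here are naturally counted on the single-valued lift $\wp(p_{r,s}(\tau)|\tau)$ in $\mathbb{H}$); note also that the specific claim $\operatorname{Im}\tau_0(m,n)\to 0$ fails e.g.\ when $s\in\mathbb{R}$, though this does not affect the argument since the $\tau_0(m,n)$ are pairwise distinct in $\mathbb{H}$ anyway.
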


\begin{proof}
[Proof of Theorem \ref{main-thm}]Let $(r,s)\in \mathbb{C}^{2}\backslash \frac
{1}{2}\mathbb{Z}^{2}$. By Theorem \ref{simple-zero} and Corollary \ref{coro},
$\lambda_{r,s}(t)$ has no poles in $\mathbb{C}\setminus \{0,1\}$ for $(r,s)\in
Q_{3}\cup Q_{4}$. Therefore, we only need to prove that $\lambda_{r,s}(t)$ has
poles in $\mathbb{C}\setminus \{0,1\}$ whenever $(r,s)\not \in Q_{3}\cup Q_{4}%
$. By Lemma \ref{complex-pair} and (\ref{equal}), we only need to consider
$(r,s)\in \lbrack0,1]\times \lbrack0,\frac{1}{2}]\backslash \frac{1}{2}%
\mathbb{Z}^{2}$. Then by Theorem \ref{thm number}, we see that $P(N) >0$
except $N=3,4$. Together with Theorem C, it is enough for us to consider%
\[
(r,s)\in \triangle_{0}\cup \cup_{k=0}^{3}\partial \triangle_{k}\backslash
\mathbb{Q}^{2}.
\]
For $(r,s)\in \cup_{k=0}^{3}\partial \triangle_{k}\backslash \mathbb{Q}^{2}$, we
have $\{r,s,r+s\} \cap \mathbb{Q}\not =\emptyset$. Taking $\gamma=%
\begin{pmatrix}
3 & 2\\
4 & 3
\end{pmatrix}
\in \Gamma(2)$ and letting $(s^{\prime},r^{\prime})=(s,r)\cdot \gamma
=(4r+3s,3r+2s)$, we deduce from $(r,s)\not \in \mathbb{Q}^{2}$ that
$\pm(r^{\prime},s^{\prime})\not \in \cup_{k=0}^{3}\partial \triangle
_{k}+\mathbb{Z}^{2}$. Then by replacing $(r^{\prime},s^{\prime})$ by some
element in $\pm(r^{\prime},s^{\prime})+\mathbb{Z}^{2}$, we may assume
$(r^{\prime},s^{\prime})\in \cup_{k=0}^{3}\triangle_{k}$. By means of
Proposition \ref{Prop-II-2}, $\lambda_{r,s}(t)$ and $\lambda_{r^{\prime
},s^{\prime}}(t)$ belong to the same solution. Therefore, we conclude that to
prove Theorem \ref{main-thm}, we only need to prove that $\lambda_{r,s}(t)$
has poles in $\mathbb{C}\backslash \{0,1\}$ provided that
\begin{equation}
(r,s)\in \triangle_{0}\backslash \mathbb{Q}^{2}=\{(r,s)\mid0<r,s<\tfrac{1}%
{2},\text{ }r+s>\tfrac{1}{2}\} \backslash \mathbb{Q}^{2}. \label{tri-0}%
\end{equation}

Fix any $(r,s)\in \triangle_{0}\backslash \mathbb{Q}^{2}$. The same argument as
(\ref{r+s}) gives%
\begin{equation}
\lambda_{r+s,s}\left(  \frac{1}{t}\right)  =\frac{\lambda_{r,s}(t)}{t}.
\label{r+s-1}%
\end{equation}
Notice that if $r+2s\not =1$, then $(r+s,s)\in \triangle_{1}\cup \triangle_{2}$.
Applying Proposition \ref{N-torsion}, it follows that $\lambda_{r+s,s}\left(
t\right)  $ has poles in $\mathbb{C}\backslash \{0,1\}$ and so does
$\lambda_{r,s}(t)$.

So it suffices to consider $r=1-2s$, which implies $s\in(\frac{1}{4},\frac
{1}{2})\backslash \mathbb{Q}$. If $s\in(\frac{1}{4},\frac{3}{8})$, then
$(r^{\prime},s^{\prime})\doteqdot(2-4s,s)\in \triangle_{1}\cup \triangle_{2}$,
which implies that $\lambda_{r^{\prime},s^{\prime}}(t)$ has poles in
$\mathbb{C}\backslash \{0,1\}$. Since
\[
(s^{\prime},r^{\prime}-1)=(s,r-2s)=(s,r)\cdot%
\begin{pmatrix}
1 & -2\\
0 & 1
\end{pmatrix}
,
\]
by applying Proposition \ref{Prop-II-2} and (\ref{equal}), we see that
$\lambda_{r,s}\left(  t\right)  $ belong to the same solution with
$\lambda_{r^{\prime},s^{\prime}}(t)$ and so has poles in $\mathbb{C}%
\backslash \{0,1\}$.

So we may assume $r=1-2s$ and $s\in(\frac{3}{8},\frac{1}{2})\backslash
\mathbb{Q}$. Then there exists $m\in \mathbb{N}$ such that either%
\begin{equation}
2mr<s<(2m+1)r, \label{case1}%
\end{equation}
or%
\begin{equation}
(2m-1)r<s<2mr. \label{case2}%
\end{equation}
Let $\gamma=%
\begin{pmatrix}
1 & 0\\
-2m & 1
\end{pmatrix}
\in \Gamma(2)$ and $(s^{\prime},r^{\prime})=(s,r)\cdot \gamma=(s-2mr,r)$. If
(\ref{case1}) holds, then $(r^{\prime},s^{\prime})=(r,s-2mr)\in \triangle_{3}$.
If (\ref{case2}) holds, then $(1-r^{\prime},-s^{\prime})=(1-r,2mr-s)\in
\triangle_{2}$. In both cases, by means of Proposition \ref{N-torsion} and
(\ref{equal}), we see that $\lambda_{r^{\prime},s^{\prime}}(t)$ has poles in
$\mathbb{C}\backslash \{0,1\}$. Since Proposition \ref{Prop-II-2} says that
$\lambda_{r,s}(t)$ and $\lambda_{r^{\prime},s^{\prime}}(t)$ belong to the same
solution, we conclude that $\lambda_{r,s}( t) $ has poles in $\mathbb{C}%
\backslash \{0,1\}$. The proof is complete.
\end{proof}

\medskip
{\bf Acknowledgement} The authors wish to thank the anonymous referee very much for his/her careful reading, valuable comments and pointing out the references \cite{Brezhnev, Shimomura} to us.

\end{document}